\newtheorem{theorem}{Theorem}[section]
\newtheorem{lemma}[theorem]{Lemma}
\newtheorem{proposition}[theorem]{Proposition}
\theoremstyle{definition}
\numberwithin{equation}{section}
\newcommand{\real}{\mathbb R}
\begin{document}

\title[Behavior near the origin of $f'(u^\ast)$ in extremal solutions] {Behavior near the origin of $f'(u^\ast)$ in radial singular extremal solutions}
\author{Salvador Villegas}
\thanks{The author has been supported by the Ministerio de Ciencia, Innovaci\'on y Universidades of Spain PGC2018-096422-B-I00 and by the Junta de Andaluc\'{\i}a  A-FQM187-UGR18.}
\address{Departamento de An\'{a}lisis
Matem\'{a}tico, Universidad de Granada, 18071 Granada, Spain.}
\email{svillega@ugr.es}

\begin{abstract}
Consider the semilinear elliptic equation $-\Delta u=\lambda f(u)$ in the unit ball $B_1\subset \mathbb{R}^N$, with Dirichlet
data $u|_{\partial B_1}=0$, where $\lambda\geq 0$ is a real parameter and $f$ is a $C^1$ positive, nondecreasing and convex function in $[0,\infty)$ such that
$f(s)/s\rightarrow\infty$ as $s\rightarrow\infty$. In this paper we study the behavior of $f'(u^\ast)$ near the origin when $u^\ast$, the extremal solution of the previous problem associated to $\lambda=\lambda^\ast$, is singular. This answers to an open problems posed by Brezis and V\'azquez \cite[Open problem 5]{BV}.
\end{abstract}

\maketitle

\section{Introduction and main results}

Consider the following semilinear elliptic equation, which has been extensively studied:
$$
\left\{
\begin{array}{ll}
-\Delta u=\lambda f(u)\ \ \ \ \ \ \  & \mbox{ in } \Omega \, ,\\
u>0 & \mbox{ in } \Omega \, ,\\
u=0  & \mbox{ on } \partial\Omega \, ,\\
\end{array}
\right. \eqno{(P_\lambda)}
$$
\

\noindent where $\Omega\subset\real^N$ is a smooth bounded domain,
$N\geq 1$, $\lambda\geq 0$ is a real parameter and the
nonlinearity $f:[0,\infty)\rightarrow \real$ satisfies

\begin{equation}\label{convexa}
f \mbox{ is } C^1, \mbox{ nondecreasing and convex, }f(0)>0,\mbox{
and }\lim_{u\to +\infty}\frac{f(t)}{t}=+\infty.
\end{equation}

It is well known that there exists a finite positive extremal
parameter $\lambda^\ast$ such that ($P_\lambda$) has a minimal
classical solution $u_\lambda\in C^0(\overline{\Omega})\cap C^2(\Omega)$ if $0<
\lambda <\lambda^\ast$, while no solution exists, even in the weak
sense, for $\lambda>\lambda^\ast$. The set $\{u_\lambda:\, 0<
\lambda < \lambda^\ast\}$ forms a branch of classical solutions
increasing in $\lambda$. Its increasing pointwise limit
$u^\ast(x):=\lim_{\lambda\uparrow\lambda^\ast}u_\lambda(x)$ is a
weak solution of ($P_\lambda$) for $\lambda=\lambda^\ast$, which
is called the extremal solution of ($P_\lambda$) (see
\cite{Bre,BV,Dup}). 

The regularity and properties of extremal solutions depend
strongly on the dimension $N$, domain $\Omega$ and nonlinearity
$f$. When $f(u)=e^u$, it was proven that $u^\ast\in L^\infty
(\Omega)$ if $N<10$ (for every $\Omega$) (see \cite{CrR,MP}),
while $u^\ast (x)=-2\log \vert x\vert$ and $\lambda^\ast=2(N-2)$
if $N\geq 10$ and $\Omega=B_1$ (see \cite{JL}). There is an
analogous result for $f(u)=(1+u)^p$ with $p>1$ (see \cite{BV}).
Brezis and V\'azquez \cite{BV} raised the question of determining
the boundedness of $u^\ast$, depending only on the dimension $N$, for general smooth bounded domains $\Omega\subset\real^N$ and nonlinearities $f$ satisfying (\ref{convexa}). This was proven by Nedev \cite{Ne} when  $N\leq3$; by Cabr\'e and Capella \cite{cc} when $\Omega=B_1$ and $N\leq 9$; by Cabr\'e \cite{ca4}  when $N=4$ and $\Omega$ is convex; by the author \cite{yo4} when $N=4$; by Cabr\'e and Ros-Oton \cite{caro} when $N\leq 7$ and $\Omega$ is a convex domain “of double revolution”; by Cabr\'e, Sanch\'on, and Spruck \cite{css} when $N=5$ and $\limsup_{t\to\infty}f'(t)/f(t)^{1+\varepsilon}<+\infty$ for every $\varepsilon>0$. Finally, in a recent paper Cabr\'e, Figalli, Ros-Oton and Serra \cite{cfros} solved completely this question by proving that $u^\ast$ is bounded if $N\leq 9$.

Another question posed by Brezis and V\'azquez \cite[Open problem 5]{BV} for singular extremal solutions is the following: What is the behavior of $f'(u^\ast)$ near the singularities? Does it look like $C/r^2$?

This question is motivated by the fact that in the explicit examples $\Omega=B_1$ and $f(u)=(1+u)^p$, $p>1$ or $f(u)=e^u$ it is always $f'(u^\ast (r))=C/r^2$ for certain positive constant $C$, when the extremal solution $u^\ast$ is singular.

In this paper we give a negative answer to this question, by showing that, in the case in which $\Omega=B_1$ and $u^\ast$ is singular, we always have $\limsup_{r\to 0}r^2f'(u^\ast(r))\in (0,+\infty)$. However, it is possible to give examples of $f\in C^\infty ([0,+\infty ))$ satisfying (\ref{convexa}) for which $u^\ast$ is singular and $\liminf_{r\to 0}r^2f'(u^\ast(r))=0$. In fact, we exhibit a large family of functions $f\in C^\infty ([0,+\infty ))$ satisfying (\ref{convexa}) for which $u^\ast$ is singular and $f'(u^\ast)$ can have a very oscillating behavior.

\begin{theorem}\label{limsup}

Assume that $\Omega=B_1$, $N\geq 10$, and that $f$ satisfies (\ref{convexa}). Suppose that the extremal solution $u^\ast$ of $(P_\lambda)$ is unbounded. 

Then $\limsup_{r\to 0} r^2 f'(u^\ast (r))\in (0,\infty)$. Moreover

$$\frac{2(N-2)}{\lambda^\ast}\leq \limsup_{r\to 0} r^2 f'(u^\ast (r))\leq \frac{\lambda_1}{\lambda^\ast} ,$$

\noindent where $\lambda_1$ denotes the first eigenvalue of the linear problem $-\Delta v=\lambda v$ in $B_1\subset {\mathbb R}^N$ with Dirichlet conditions $v=0$ on $\partial B_1$.

\end{theorem}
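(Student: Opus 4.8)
The plan is to establish the two one‑sided estimates separately. First some reductions: since $\Omega=B_1$ and $f\geq f(0)>0$, each minimal solution $u_\lambda$ is radially symmetric and strictly decreasing (Gidas--Ni--Nirenberg), hence so is $u^\ast$; since $u^\ast$ is unbounded and radial non‑increasing, $u^\ast(r)\to+\infty$ as $r\to0$ and $r\mapsto f'(u^\ast(r))$ is non‑increasing and positive for small $r$. Throughout I use that $u^\ast$ is semistable, i.e.\ $\lambda^\ast\int_{B_1}f'(u^\ast)\varphi^2\leq\int_{B_1}|\nabla\varphi|^2$ for $\varphi\in C^\infty_c(B_1)$, and that $u^\ast\in C^2(B_1\setminus\{0\})$ solves $-(r^{N-1}u^{\ast\prime})'=\lambda^\ast r^{N-1}f(u^\ast)$ on $(0,1)$.

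\emph{The upper bound.} Fix $\rho\in(0,1)$ and let $\phi_\rho>0$ be the first Dirichlet eigenfunction of $-\Delta$ in $B_\rho$, so $-\Delta\phi_\rho=(\lambda_1/\rho^2)\phi_\rho$ in $B_\rho$ and $\phi_\rho=0$ on $\partial B_\rho$; extend $\phi_\rho$ by $0$ to $B_1$. Inserting $\varphi=\phi_\rho$ into the stability inequality (legitimate after a standard cut‑off of a small ball around the origin, which in passing yields $f'(u^\ast)\in L^1_{\mathrm{loc}}(B_1)$) and using $f'(u^\ast(r))\geq f'(u^\ast(\rho))$ for $r\leq\rho$,
\[
\lambda^\ast f'(u^\ast(\rho))\int_{B_\rho}\phi_\rho^2\;\leq\;\lambda^\ast\int_{B_\rho}f'(u^\ast)\phi_\rho^2\;\leq\;\int_{B_\rho}|\nabla\phi_\rho|^2\;=\;\frac{\lambda_1}{\rho^2}\int_{B_\rho}\phi_\rho^2 .
\]
Cancelling the integral gives $\rho^2 f'(u^\ast(\rho))\leq\lambda_1/\lambda^\ast$ for every $\rho\in(0,1)$, whence $\limsup_{r\to0}r^2 f'(u^\ast(r))\leq\lambda_1/\lambda^\ast$.

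\emph{The lower bound.} Pass to Emden--Fowler variables $t=-\log r$, $y(t):=u^\ast(e^{-t})$ for $t\in(0,\infty)$, so that $y_t=-r\,u^{\ast\prime}(r)>0$, $y(t)\to+\infty$ as $t\to\infty$, and $y_{tt}-(N-2)y_t+\lambda^\ast e^{-2t}f(y)=0$. Since $f(u^\ast)\in C^1$, the radial equation bootstraps $u^\ast\in C^3(B_1\setminus\{0\})$, so $y\in C^3$; differentiating the equation in $t$ and substituting $\lambda^\ast e^{-2t}f(y)=(N-2)y_t-y_{tt}$ gives the identity
\[
W(t):=\lambda^\ast r^2 f'(u^\ast(r))=\lambda^\ast e^{-2t}f'(y(t))\;=\;2(N-2)+\frac{(N-4)\,y_{tt}-y_{ttt}}{y_t},
\]
equivalently, with $z:=y_t>0$, the linear equation $z_{tt}=(N-4)z_t+\bigl(2(N-2)-W(t)\bigr)z$. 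Suppose, for contradiction, that $\limsup_{r\to0}r^2 f'(u^\ast(r))<2(N-2)/\lambda^\ast$; then $W\leq 2(N-2)-\delta$ on $[T_0,\infty)$ for some $\delta>0$, so $z_{tt}\geq(N-4)z_t+\delta z$ there. Put $\mu_\pm=\tfrac12\bigl((N-4)\pm\sqrt{(N-4)^2+4\delta}\bigr)$, so $\mu_-<0<N-4<\mu_+$ and $\mu_+\mu_-=-\delta$, and set $w:=z_t-\mu_- z$; using $N-4-\mu_-=\mu_+$ one computes $w_t\geq\mu_+ w$ on $[T_0,\infty)$, so $e^{-\mu_+ t}w$ is non‑decreasing. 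If $w\leq0$ on all of $[T_0,\infty)$, then $z_t\leq\mu_- z$ with $\mu_-<0$, so $z(t)\leq Ce^{\mu_- t}$ is integrable at $+\infty$ and $y(t)=y(T_0)+\int_{T_0}^{t}z$ stays bounded --- impossible since $y\to+\infty$. Otherwise $w(t_1)>0$ for some $t_1$, hence $w(t)\geq c_1 e^{\mu_+ t}$; integrating $z_t-\mu_- z\geq c_1 e^{\mu_+ t}$ gives $z(t)\geq c_2 e^{\mu_+ t}$ and then $y(t)\geq c_3 e^{\mu_+ t}$, i.e.\ $u^\ast(r)\geq c_3\,r^{-\mu_+}$ near $r=0$ with $\mu_+>N-4$. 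This contradicts the bound on how singular a radial semistable solution of $(P_{\lambda^\ast})$ on $B_1$ can be, obtained from Cabr\'e--Capella‑type test functions \cite{cc}: for $N\geq10$ such a solution satisfies $u^\ast(r)\leq C r^{-\beta_N}$ near the origin with some $\beta_N<N-4$. Hence $\limsup_{r\to0}r^2 f'(u^\ast(r))\geq 2(N-2)/\lambda^\ast$, and combined with the upper bound the $\limsup$ lies in $(0,\infty)$. (As a check: for $f(u)=e^u$ one has $u^\ast=-2\log r$ and $W\equiv 2(N-2)$, so the contradiction hypothesis cannot occur.)

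\emph{Main obstacle.} The upper bound is routine. In the lower bound the change of variables, the identity for $W$, and the ODE dichotomy are all elementary; the delicate inputs are (a) the sharp fact that a \emph{singular} radial semistable solution on $B_1$ cannot be as singular as $r^{-(N-4)}$ --- this is the one place where semistability is genuinely used, and it is the collision of the exponent $N-4$ produced by the identity for $W$ with the Joseph--Lundgren‑type threshold $\beta_N<N-4$ that pins the constant at $2(N-2)$ --- and (b) the regularity upgrade $u^\ast\in C^3$ away from the origin, needed to legitimise the identity. Pinning down the precise form of (a) and verifying it in the stated generality (arbitrary convex $f$ satisfying (\ref{convexa}), not only powers or $e^u$) is the crux.
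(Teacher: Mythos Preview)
Your upper bound is exactly the paper's argument. For the lower bound you take a genuinely different route: the paper rescales $u_r^\ast$ to a solution $\omega$ of an auxiliary linearized problem $(P_\Psi)$ and applies a variational comparison principle (its Proposition~2.1) against the explicit solution $\omega_2(r)=r^\alpha$ of $(P_{\Psi_2})$ with $\Psi_2=(2(N-2)-\varepsilon)/r^2$; since $\alpha=\tfrac{1}{2}\bigl(2-N+\sqrt{(N-4)^2+4\varepsilon}\bigr)>-1$ this forces $u_r^\ast\in L^1$ near $0$, contradicting unboundedness. Your Emden--Fowler substitution and the differential inequality $z_{tt}\geq(N-4)z_t+\delta z$ are a direct ODE analogue, and your first dichotomy case reproduces exactly this contradiction.

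The weak point is your second case. You invoke a pointwise bound $u^\ast(r)\leq Cr^{-\beta_N}$ with $\beta_N<N-4$ from ``Cabr\'e--Capella-type test functions'' and flag this yourself as the crux; as stated it is vague, and \cite{cc} does not quite give a pointwise estimate of this form for $N\geq 10$. But no such external input is needed: your second case yields $|u_r^\ast(r)|\geq c\,r^{-\mu_+-1}$ near $0$ with $\mu_+>N-4\geq(N-2)/2$ (the last inequality holding since $N\geq 10$), hence
\[
\int_{B_1}|\nabla u^\ast|^2\;\geq\;c'\int_0^{r_1} r^{N-3-2\mu_+}\,dr\;=\;+\infty,
\]
which already contradicts the standard fact $u^\ast\in W_0^{1,2}(B_1)$. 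With this correction your proof is complete and entirely elementary. The paper's route has the advantage that the $W^{1,2}$ framework of its comparison principle rules out the fast-growth branch automatically, and that the same proposition is reused as the main tool in the constructions of the remaining theorems.
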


\begin{theorem}\label{liminf}

Assume that $\Omega=B_1$, $N\geq 10$, and that $\varphi :(0,1)\rightarrow {\mathbb R^+}$ satisfies $\lim_{r\to 0} \varphi (r)=+\infty$. Then there exists $f\in C^\infty([0,+\infty))$ satisfying (\ref{convexa}) such that the extremal solution $u^\ast$ of $(P_\lambda)$ is unbounded and 

$$\liminf_{r\to 0} \frac{f'(u^\ast (r))}{\varphi (r)}=0.$$

\end{theorem}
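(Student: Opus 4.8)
The plan is to \emph{construct} the desired $f$ by prescribing the radial profile of the candidate extremal solution. Since $u^\ast$ must be radially decreasing, write $-u^{\ast\,\prime}(r)=h(r)/r$ with $h>0$, pass to $t=-\log r$ and set $H(t)=h(e^{-t})$, so that $u^\ast(r)=\int_0^t H$, $-\Delta u^\ast(r)=e^{2t}\big((N-2)H-\dot H\big)$, and, seeking $f$ for which $u^\ast$ solves $-\Delta u^\ast=f(u^\ast)$ (that is $(P_1)$),
$$ r^2 f'(u^\ast(r))=Q(t):=2(N-2)-\frac{\ddot H-(N-4)\dot H}{H},\qquad f'(u^\ast(r))=P(t):=e^{2t}Q(t). $$
Under this correspondence, (\ref{convexa}) for $f$ becomes: $\dot\beta<N-2$ where $\beta:=\log H$ (positivity of $f$), $Q\ge 0$ ($f$ nondecreasing), $P$ nondecreasing in $t$ ($f$ convex), and $P(t)\to\infty$ (which forces $f(s)/s\to\infty$); unboundedness of $u^\ast$ amounts to $\int_0 h/\rho\,d\rho=\infty$; and semi-stability follows from the Hardy inequality as soon as $Q(t)\le (N-2)^2/4$, which for $N\ge 10$ holds whenever $0\le Q\le 2(N-2)$. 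By the known characterisation of $u^\ast$ as the unique unbounded semi-stable weak solution (Brezis--V\'azquez, Martel), it will suffice to produce such an $H$ for which the associated $f$ is $C^\infty$ and $\liminf_{r\to0}f'(u^\ast(r))/\varphi(r)=0$; the characterisation then also gives $\lambda^\ast=1$ and that $u^\ast$ is the extremal solution.

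The construction prescribes $P(t)=f'(u^\ast(r))$ directly as a nondecreasing ``staircase with long flat treads'': fix $\tau_1$, let $P\equiv P_j:=2(N-2)e^{2\tau_j}$ on a plateau $[\tau_j,\tau_j+\ell_j]$, and let $P$ increase smoothly to $P_{j+1}=2(N-2)e^{2\tau_{j+1}}$ on a short recovery interval of fixed length, where $\tau_{j+1}=\tau_j+\ell_j+O(1)$. On each plateau $Q=e^{-2t}P_j=2(N-2)e^{2(\tau_j-t)}$ decays to $0$, so $0\le Q\le 2(N-2)$ throughout and $\limsup Q=2(N-2)$ (semi-stability, and consistency with Theorem~\ref{limsup}). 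The essential freedom is the plateau length: since $\varphi(e^{-t})\to\infty$ as $t\to\infty$, one may choose $\ell_j$ so large that at some $r_j=e^{-t_j}$ with $t_j$ near the right end of the $j$-th plateau one has $\varphi(r_j)\ge j\,P_j$; then $f'(u^\ast(r_j))=P(t_j)=P_j$ gives $f'(u^\ast(r_j))/\varphi(r_j)\le 1/j\to0$, whence $\liminf_{r\to0}f'(u^\ast(r))/\varphi(r)=0$. (This choice is carried out recursively, each $\ell_j$ being some finite number.)

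It remains to recover $H$ from $Q$ and to verify the required properties. Writing $v=\dot\beta$, the relation defining $Q$ is the first-order system $\dot v=2(N-2)-Q-v^2+(N-4)v$, $\dot\beta=v$, which we solve from $\beta(0)=0$, $v(0)=0$; smoothing the corners of $P$ makes $Q$, hence $\beta$ and $f$, of class $C^\infty$, and a constant warm-up tread $H\equiv1$ near $r=1$ makes $f(s)=(N-2)e^{2s}$ near $s=0$, so $f$ is smooth up to $0$ with $f(0)=N-2>0$. On a plateau $v$ is attracted to the slowly varying equilibrium $v_+\big(2(N-2)-Q\big)\in[N-4,N-2)$, so $N-2-v\approx Q/N$ there and $0\le v<N-2$ everywhere, i.e.\ $f>0$; consequently $h=e^\beta\ge 1$ is bounded below so $u^\ast$ is singular, and $f$ convex with $f'\to\infty$ forces $f(s)/s\to\infty$. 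Thus (\ref{convexa}) holds and $u^\ast$ is singular and semi-stable.

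The genuinely delicate point — and the reason the plateaus may be made arbitrarily long — is that $u^\ast$ must still be a \emph{weak} solution, i.e.\ $f(u^\ast)\,\mathrm{dist}(\cdot,\partial B_1)\in L^1(B_1)$, which in the $t$ variable reads $\int^\infty (N-2-\dot\beta)\,e^{\beta(t)-(N-2)t}\,dt<\infty$. Here the algebraic structure saves us: on the $j$-th plateau $N-2-\dot\beta\approx Q/N=\tfrac{2(N-2)}{N}e^{2(\tau_j-t)}$ decays exponentially while $\beta-(N-2)t$ varies by only $O(1)$, so the plateau contributes $O\!\big(e^{\beta(\tau_j)-(N-2)\tau_j}\big)$ regardless of $\ell_j$; and since $\dot\beta\le N-4<N-2$ on the initial transient of each plateau and on each recovery, $\beta-(N-2)t$ decreases by at least a fixed $\kappa>0$ from one plateau to the next, so $e^{\beta(\tau_j)-(N-2)\tau_j}\lesssim e^{-\kappa j}$ and the integral converges. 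The same decay $\beta(t)-(N-2)t\to-\infty$ also gives $u^\ast\in L^1(B_1)$ and kills the boundary term at the origin, so $u^\ast$ is a weak solution on all of $B_1$, hence the extremal solution. The main obstacle throughout is exactly this balancing act — plateaus long enough to exploit $\varphi(r)\to\infty$, yet ``short'' in the weighted sense needed to keep $f(u^\ast)\,\mathrm{dist}(\cdot,\partial B_1)$ integrable — and it is precisely at the semi-stability step that $N\ge10$, equivalent to $2(N-2)\le (N-2)^2/4$, enters.
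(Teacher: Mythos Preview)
Your high-level strategy coincides with the paper's: prescribe the potential $\Psi(r)=f'(u^\ast(r))$ (your $P$ in the variable $t=-\log r$), recover $-u'$ from it, set $f:=(-\Delta u)\circ u^{-1}$, check (\ref{convexa}) and semi-stability via Hardy (this is where $N\ge10$ enters), and identify $u$ with $u^\ast$ through the Brezis--V\'azquez characterisation. The implementations differ. The paper does not solve your first-order ODE for $v=\dot\beta$; instead it obtains $\omega=-u'$ as the unique $W^{1,2}$ solution of the linear problem $(P_\Psi)$ by a direct variational argument (Proposition~\ref{key}), which comes equipped with a comparison principle. That buys three things you must argue by hand: $\omega\in W^{1,2}(B_1)$ automatically, so $u\in W^{1,2}_0$ and no separate verification of $f(u^\ast)\,\mathrm{dist}(\cdot,\partial B_1)\in L^1$ is needed; the divergence $\int_0^1\omega=\infty$ by comparison with an explicit piecewise solution (Lemma~\ref{AB}), replacing all of your slowly-varying-equilibrium heuristics; and $f(0)>0$ by comparing $\omega$ with $r^{-1}$. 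Your ODE route is legitimate, but it requires genuine work at each of these points, which you leave largely heuristic.

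There is one concrete mis-step. The claim ``$\dot\beta\le N-4$ on the initial transient of each plateau and on each recovery'' is false: at the end of a long plateau $v=\dot\beta$ is within $O(Q)$ of $N-2$, and throughout the (fixed-length) recovery and at the start of the next plateau it is still near $N-2$, not below $N-4$. The conclusion you need --- that $\beta(t)-(N-2)t$ drops by at least a fixed $\kappa>0$ over each cycle --- is nevertheless correct, but the honest argument is to estimate $\int_{\tau_j}^{\tau_j+\ell_j}(N-2-v)\,dt$ directly: writing $w=N-2-v$ one has $\dot w=-Nw+w^2+Q$, and with $Q=2(N-2)e^{-2(t-\tau_j)}$ on the plateau the linearised solution gives $\int_{\tau_j}^{\tau_j+\ell_j}w\,dt\to w(\tau_j)/N+(N-2)/N\ge (N-2)/N$ as $\ell_j\to\infty$, uniformly. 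A related point: in your construction $u\notin W^{1,2}(B_1)$ in general (on long plateaus $2\beta-(N-2)t\to+\infty$), so you are right to work in the very-weak $L^1$ framework rather than the energy framework the paper uses --- but then the ``unbounded semi-stable weak solution $\Rightarrow u=u^\ast$'' step requires the corresponding $L^1$ uniqueness result (Martel), not the $W^{1,2}_0$ version the paper quotes.
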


Note that in the case $\varphi(r)=1/r^2$, we would obtain $\liminf_{r\to 0} r^2 f'(u^\ast (r))=0$. This answers negatively to \cite[Open problem 5]{BV}. In fact $r^2 f'(u^\ast (r))$ could be very oscillating, as the next result shows.

\begin{theorem}\label{oscillation}
Assume that $\Omega=B_1$, $N\geq 10$, and let $0\leq C_1\leq C_2$, where $C_2\in[2(N-2),(N-2)^2/4]$. Then there exists $f\in C^\infty([0,+\infty))$ satisfying (\ref{convexa}) such that the extremal solution $u^\ast$ of $(P_\lambda)$ is unbounded, $\lambda^\ast=1$ and 

$$\liminf_{r\to 0} r^2 f'(u^\ast (r))=C_1, $$

$$\limsup_{r\to 0} r^2 f'(u^\ast (r))=C_2. $$

\end{theorem}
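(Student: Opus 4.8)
The plan is to construct $f$ by prescribing the singular extremal solution $u^\ast$ directly and then reading off $f$ from the equation, exploiting the rigidity of the radial singular problem. Fix $\lambda^\ast=1$. The idea is to choose a radial function $u^\ast(r)$ that blows up like $-2\log r$ as $r\to 0$ (the generic rate forced by the semistability/Hardy inequality threshold when $N\ge 10$), but with a carefully engineered lower-order oscillating correction, say $u^\ast(r) = -2\log r + w(r)$ with $w$ bounded. Writing the radial Laplacian $-\Delta u^\ast = -u^{\ast\prime\prime} - \frac{N-1}{r}u^{\ast\prime}$, one has $-\Delta u^\ast = g(r)$ for an explicit $g$ determined by $w$; we then want to realize $g(r) = f(u^\ast(r))$ for a convex nondecreasing $f$. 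Since $u^\ast$ is strictly decreasing in $r$ (this must be built into the choice of $w$), the map $r\mapsto u^\ast(r)$ is invertible on $(0,1)$, so $f$ is forced on the range $(u^\ast(1),\infty) = (0,\infty)$ by $f(t) := g\big((u^\ast)^{-1}(t)\big)$; one then checks $f$ can be taken $C^\infty$, positive, nondecreasing, convex, superlinear, with $f(0)>0$, by imposing mild structural conditions on $w$ (smallness of $w'$, $w''$ and appropriate monotonicity/convexity of the composed function). Near $r=1$ one modifies $f$ freely to match the Dirichlet data and the $C^\infty$ requirements without affecting the $r\to 0$ asymptotics.

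The second and crucial ingredient is to verify that the $u^\ast$ so constructed is genuinely \emph{the} extremal solution for the resulting $f$, i.e.\ that it is semistable and that $\lambda^\ast=1$. Semistability amounts to $\int_{B_1} f'(u^\ast)\xi^2 \le \int_{B_1}|\nabla\xi|^2$ for all $\xi\in C_c^\infty(B_1)$; by the Hardy inequality $\int|\nabla\xi|^2 \ge \frac{(N-2)^2}{4}\int \xi^2/|x|^2$, it suffices to ensure $f'(u^\ast(r)) \le \frac{(N-2)^2}{4}\,\frac{1}{r^2}$ for all $r$ — this is exactly where the hypothesis $C_2 \le (N-2)^2/4$ enters. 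Conversely, to force $u^\ast$ to be singular (not merely a weak solution dominated by a bounded one) and to pin $\lambda^\ast=1$, I would use the lower bound $f'(u^\ast(r)) \ge \frac{2(N-2)}{r^2}$ along a sequence, mirroring the mechanism behind Theorem \ref{limsup}: a singular semistable solution cannot have $r^2 f'(u^\ast(r))$ tending below $2(N-2)$, and the standard uniqueness/characterization of the extremal solution (e.g.\ via the fact that a singular semistable weak solution is extremal) then gives $\lambda^\ast=1$. Thus the admissible window for $\limsup r^2 f'(u^\ast(r))$ is precisely $[2(N-2),(N-2)^2/4]$, matching the stated constraint on $C_2$.

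With these constraints identified, the construction of $w$ becomes a one-dimensional design problem: one needs $h(r) := r^2 f'(u^\ast(r))$ to oscillate with $\liminf = C_1$ and $\limsup = C_2$. Since $f' = $ (derivative of $f$) and $f(u^\ast(r)) = g(r) = -\Delta u^\ast$, differentiating the identity $f(u^\ast(r))=g(r)$ in $r$ gives $f'(u^\ast(r)) = g'(r)/u^{\ast\prime}(r)$, so $h(r) = r^2 g'(r)/u^{\ast\prime}(r)$; with $u^{\ast\prime}(r) = -2/r + w'(r) \approx -2/r$, we get $h(r) \approx -\tfrac{r^3}{2} g'(r)$, and $g$ is an explicit second-order expression in $w$. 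So the task is to choose $w$ (small, smooth, with controlled derivatives) so that this explicit functional of $w$ oscillates between $C_1$ and $C_2$ at $r\to0$; a convenient device is to work on a logarithmic scale $t=-\log r$ and let $w$ be built from a slowly-varying modulation (e.g.\ a smooth interpolation, on widely separated dyadic-in-$t$ intervals, between the two "profiles" corresponding to constants $C_1$ and $C_2$), sparse enough that all monotonicity and convexity checks for $f$ survive.

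The main obstacle I anticipate is the \emph{simultaneous} enforcement of all the constraints: the oscillation must be large enough to reach down to $C_1$ (possibly $0$) and up to $C_2$, yet $f$ must remain globally convex and nondecreasing and $u^\ast$ must stay monotone and semistable throughout — convexity of $f$ as a function of $t=u^\ast$ is a genuinely nonlocal constraint tying together the behavior of $w$ on long stretches of the $r$-axis, and making $\liminf = C_1$ while keeping $f'\ge 0$ everywhere (and $f$ convex) forces the descents toward $C_1$ to be slow and the ascents controlled. Reconciling "$f'$ oscillates a lot" with "$f'$ is monotone in its own argument $t$" is the crux; the resolution is that oscillation in $r$ is compatible with monotonicity in $t$ because the change of variables $r\mapsto t=u^\ast(r)$ is itself distorted by $w$, giving enough freedom — but quantifying this freedom and choosing $w$ explicitly is the technical heart of the proof.
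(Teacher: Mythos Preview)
Your strategy---prescribe the singular solution, define $f:=(-\Delta u)\circ u^{-1}$, verify semistability via Hardy (using $C_2\le (N-2)^2/4$), and conclude extremality via the Brezis--V\'azquez characterization of singular semistable solutions---is exactly the paper's. The difference is in the parametrization, and that difference matters.

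The paper does not prescribe $u^\ast$; it prescribes $\Phi(r):=f'(u^\ast(r))$ directly, then solves the linear problem $(P_\Phi)$ (i.e.\ $-\Delta\omega=(\Phi-(N-1)/r^2)\omega$, $\omega(1)=1$) for $\omega=-u_r^\ast$, and finally sets $u(r)=\int_r^1\omega$. This reversal dissolves the ``obstacle'' you flag. Convexity of $f$ is equivalent, via $f''(u(r))\,u'(r)=\Phi'(r)$, to the single pointwise condition $\Phi'(r)<0$. So the design problem is: find a smooth strictly decreasing $\Phi$ with $r^2\Phi(r)$ oscillating between $C_1$ and $C_2$, $\Phi\le (N-2)^2/(4r^2)$, and $\int_0^1\omega=+\infty$. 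The paper takes $\Phi=\frac{C_2-C_1}{2(N-2)}\Psi+\frac{C_1}{r^2}$, where $\Psi$ is the oscillating function already built in Proposition~\ref{peasofuncion}; monotonicity, the limsup/liminf values, and the Hardy bound are then one-line checks, and $\int\omega=+\infty$ follows by comparison (Proposition~\ref{key}).

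Your stated resolution---that monotonicity of $f'$ in $t$ is reconciled with oscillation in $r$ by the ``distortion'' of the change of variables $r\mapsto u^\ast(r)$---is not right. Since $u^\ast$ is strictly decreasing and $f'$ nondecreasing, $f'(u^\ast(r))$ is forced to be nonincreasing in $r$; there is no freedom from the change of variables. The oscillation lives entirely in the factor $r^2$: a decreasing $\Phi$ can still have $r^2\Phi(r)$ oscillate (the constraint is merely $(r^2\Phi)'<2r\Phi$). Your ansatz $u^\ast=-2\log r+w$ with $w$ bounded is also unnecessarily restrictive and may fail for $N>10$, where the comparison with $(P_{\Psi_2})$ only gives $-u_r^\ast\le r^{-N/2+\sqrt{N-1}+1}$. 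Switching to the paper's $\Phi$-first viewpoint turns the ``technical heart'' you left open into a routine interpolation.
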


Note that if $C_1=C_2$, then the interval $[2(N-2),(N-2)^2/4]$ is optimal: $C_2\geq 2(N-2)$ by Theorem \ref{limsup}, while $C_1\leq (N-2)^2/4$ by Hardy's inequality.

\begin{theorem}\label{cualquiera}

Assume that $\Omega=B_1$, $N\geq 11$, and that $\Psi\in C(\overline{B_1}\setminus \{ 0\} )$ is a radially symmetric decreasing function satisfying
$$\frac{2(N-2)}{r^2}\leq \Psi(r) \leq \frac{(N-2)^2}{4 r^2}, \ \ \mbox{ for every } 0<r\leq 1.$$

Then there exist $f\in C^1([0,+\infty))$ satisfying (\ref{convexa}) such that $\lambda^\ast =1$ and 

$$f'(u^\ast (x))=\Psi (x), \ \ \mbox{ for every } x\in \overline{B_1}\setminus\{ 0\}.$$

Moreover, this function $f$ is unique up to a multiplicative constant. That is, if $g$ is a function with the above properties, then there exists $\alpha>0$ such that $g=\alpha \, f(\cdot /\alpha)$ (whose extremal solution is $\alpha u^\ast$).

\end{theorem}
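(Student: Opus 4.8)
The goal is, given a prescribed radial decreasing profile $\Psi$ pinched between $2(N-2)/r^2$ and $(N-2)^2/(4r^2)$, to construct a convex nonlinearity $f$ whose (singular) extremal solution $u^\ast$ satisfies $f'(u^\ast(r))=\Psi(r)$. Let me outline the strategy.

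\emph{Step 1: Recover $u^\ast$ from $\Psi$.} If $u^\ast$ is the radial singular extremal solution with $\lambda^\ast=1$, then on $B_1\setminus\{0\}$ it solves the ODE $-(u^\ast)''-\frac{N-1}{r}(u^\ast)'=f(u^\ast)$, and the semi-stability of $u^\ast$ (which for the extremal solution is the borderline case, i.e. $\int |\nabla\xi|^2 = \int f'(u^\ast)\xi^2$ is attained in the limit) forces $u^\ast$ to be, morally, the solution that makes the linearized operator $-\Delta - f'(u^\ast)$ have zero bottom of the spectrum in a suitable sense. The key idea: define $\psi(r):=f'(u^\ast(r))=\Psi(r)$ as given. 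Since $f$ is increasing, $f'(u^\ast(r))$ is a decreasing function of $r$ (as $u^\ast$ is decreasing and $f'$ is nondecreasing), consistent with the hypothesis on $\Psi$. Now I would try to determine $u^\ast$ directly: the function $w:=f'(u^\ast)$ satisfies, by differentiating the equation, $-\Delta w = f''(u^\ast)\bigl(-\Delta u^\ast\bigr)\cdot(\text{chain rule adjustments})$, but more usefully I would use the Cabré–Capella type substitution. Actually the cleanest route: look for $u^\ast$ of the form determined by the requirement that $r^{(N-2)/2}$ times a solution of the linearized equation behaves like the extremal (Hardy-critical) eigenfunction. Concretely, set $t=-\log r$ and recall that for the model case the extremal solution corresponds to $-\Delta u^\ast = \frac{H}{r^2}$-type behavior where $H\in[2(N-2),(N-2)^2/4]$; the two bounds on $\Psi$ are exactly $2(N-2)/r^2$ (the lower bound forced by Theorem \ref{limsup}, via $\limsup r^2 f'(u^\ast)\geq 2(N-2)/\lambda^\ast$) and the Hardy constant $(N-2)^2/4$ (the upper bound forced by semi-stability, since $\int f'(u^\ast)\xi^2\le\int|\nabla\xi|^2$ and Hardy's inequality is sharp). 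So the admissible range for $\Psi$ is precisely dictated by these two constraints, and within it we expect a one-parameter family (the multiplicative scaling) of solutions.

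\emph{Step 2: Solve the inverse problem.} Given $\Psi$, I would define $u^\ast$ by integrating an ODE. The linearized stability being critical means $\varphi_1(r):=$ the first "generalized eigenfunction" of $-\Delta - \Psi$ on $B_1$ with zero eigenvalue exists and is positive; equivalently there is a positive solution $\sigma$ of $-\sigma''-\frac{N-1}{r}\sigma'=\Psi(r)\sigma$ on $(0,1)$ with $\sigma(1)=0$ (this is where $N\geq 11$ enters—guaranteeing enough room below the Hardy constant for the ODE to have the right nodal behavior; for $N=10$ the endpoint is too tight). Then, using that $u^\ast$ itself and $r\partial_r u^\ast$ are related to solutions of the linearized equation, I would reconstruct $u^\ast$ via a formula like $u^\ast(r)=\int_r^1 \frac{1}{s^{N-1}}\int_0^s \tau^{N-1}g(\tau)\,d\tau\,ds$ where $g$ is chosen so that $g = f(u^\ast)$ is consistent. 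The self-consistency is resolved by observing that $f$ is recovered from the pair $(u^\ast, \Psi)$ by: $f'$ as a function of the value $u$ is $\Psi\bigl((u^\ast)^{-1}(u)\bigr)$ (legitimate since $u^\ast$ is strictly decreasing, hence invertible, from $(0,1)$ onto $(u^\ast(1^-),\infty)=(0,\infty)$), and then $f(u)=f(0)+\int_0^u f'(\sigma)\,d\sigma$ with $f(0)$ fixed by the requirement $-\Delta u^\ast = f(u^\ast)$ evaluated appropriately, plus a smooth convex extension argument near $u=0$ to ensure $f(0)>0$ and $f\in C^1$. Convexity of $f$ follows because $f'\circ u^\ast = \Psi$ is decreasing in $r$ while $u^\ast$ is decreasing in $r$, so $f'$ is nondecreasing in $u$; and $f(u)/u\to\infty$ follows from the lower bound $\Psi(r)\ge 2(N-2)/r^2$ which forces $f'(u)\to\infty$, hence superlinearity.

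\emph{Step 3: Verify $\lambda^\ast=1$ and that $u^\ast$ is the extremal solution.} Here I would invoke the characterization that a singular semi-stable solution of $(P_1)$ with $-\Delta-f'(u^\ast)\ge 0$ (in the sense of the quadratic form being nonnegative on $H^1_0(B_1)$) is \emph{the} extremal solution with $\lambda^\ast=1$—this uses the known uniqueness of the extremal solution and the fact that semi-stability together with being a weak solution pins down $\lambda=\lambda^\ast$. The semi-stability inequality $\int_{B_1} \Psi\,\xi^2 \le \int_{B_1}|\nabla\xi|^2$ for all $\xi\in C^\infty_c$ is exactly the hypothesis $\Psi(r)\le (N-2)^2/(4r^2)$ combined with Hardy's inequality. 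One must also check $u^\ast\in H^1_0(B_1)$ (so it is a genuine weak solution), which holds for $N\ge 11$ since then $u^\ast$ grows slower than the critical rate and $\int_{B_1}|\nabla u^\ast|^2<\infty$ is guaranteed by the dimension; this is another place $N\geq 11$ (rather than $N\geq 10$) is needed.

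\emph{Step 4: Uniqueness up to scaling.} Suppose $g$ is another such nonlinearity with extremal solution $v^\ast$ and $g'(v^\ast(x))=\Psi(x)$. The scaling $(P_\lambda)$ enjoys is: if $u$ solves $-\Delta u=f(u)$ then $\alpha u(\cdot)$ — wait, more precisely, replacing $f$ by $f_\alpha(u):=\alpha f(u/\alpha)$ replaces $u^\ast$ by $\alpha u^\ast$ and leaves $\lambda^\ast$ and $f'(u^\ast)$ invariant ($f_\alpha'(\alpha u^\ast)=f'(u^\ast)$). So the family $\{f_\alpha\}$ all share the same $\Psi$. For uniqueness, note $\Psi$ determines $u^\ast$ up to the multiplicative constant $\alpha=u^\ast(0^+)/v^\ast(0^+)$—more carefully, up to the normalization of $u^\ast$ at a single point, because the ODE reconstruction in Step 2 has a one-dimensional solution set parametrized exactly by this scaling (the homogeneity of $\Psi$ near $r=0$, being $\asymp 1/r^2$, makes the ODE $-u''-\frac{N-1}{r}u'=f(u)$ scale-covariant). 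Once $v^\ast=\alpha u^\ast$, then $g'(u)=\Psi((v^\ast)^{-1}(u))=\Psi((u^\ast)^{-1}(u/\alpha))=f'(u/\alpha)$, and integrating (matching the constants via $g(0)$, which is forced) gives $g=\alpha f(\cdot/\alpha)$.

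\emph{Main obstacle.} The crux is Step 2–3: showing that the inverse-problem ODE actually produces a function $u^\ast$ that is (a) globally defined on $(0,1)$ with the right boundary behavior $u^\ast(1^-)=0$ and $u^\ast(0^+)=+\infty$, (b) yields a \emph{convex $C^1$} $f$ after the change of variables—in particular smoothly extendable past $u=0$ with $f(0)>0$—and (c) is genuinely the extremal solution, i.e. semi-stable and not just some singular solution. Controlling the behavior as $r\to 0$ requires delicate ODE asymptotics: with $\Psi(r)=H(r)/r^2$ where $H(r)\in[2(N-2),(N-2)^2/4]$ possibly oscillating, the linearized equation $-\sigma''-\frac{N-1}{r}\sigma'=\frac{H(r)}{r^2}\sigma$ has characteristic exponents $\frac{-(N-2)\pm\sqrt{(N-2)^2-4H}}{2}$ which merge when $H=(N-2)^2/4$; keeping $H$ strictly below this (or handling the boundary case by a limiting argument) is what forces $N\geq 11$ and is the technically hardest point. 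I expect the resolution to lean on a Gronwall/comparison argument in the variable $t=-\log r$ to trap $u^\ast$ between the two model solutions, together with the regularity theory for $f$ obtained by a careful matched-asymptotics gluing near the origin.
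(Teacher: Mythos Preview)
Your proposal has the right overall architecture---reconstruct $u^\ast$ from $\Psi$, then define $f$ via $u^\ast$, then verify (\ref{convexa}) and extremality---but the central technical device is misidentified, and this is a genuine gap.

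You propose in Step~2 to work with a positive solution $\sigma$ of $-\sigma''-\frac{N-1}{r}\sigma'=\Psi(r)\sigma$ with $\sigma(1)=0$, i.e.\ a zero eigenfunction of $-\Delta-\Psi$ on $H^1_0(B_1)$. That is not the object the construction hinges on. The paper instead uses the \emph{radial derivative} $u_r^\ast$: differentiating $-\Delta u^\ast=f(u^\ast)$ gives $-\Delta u_r^\ast=\bigl(f'(u^\ast)-\frac{N-1}{r^2}\bigr)u_r^\ast$. Thus $\omega:=-u_r^\ast/(-u_r^\ast(1))$ solves the linear problem $(P_\Psi)$ of Proposition~\ref{key}, namely $-\Delta\omega=\bigl(\Psi-\frac{N-1}{r^2}\bigr)\omega$ with boundary condition $\omega(1)=1$ (not $0$). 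Proposition~\ref{key} gives existence, uniqueness, positivity, and a comparison principle for this problem. One then sets $u(r):=\int_r^1\omega(t)\,dt$ and $f:=(-\Delta u)\circ u^{-1}$, and checks directly that $f'(u(r))=\Psi(r)$ by plugging $u_r=-\omega$ back into the derivative equation. Your self-consistency formula $u^\ast(r)=\int_r^1\frac{1}{s^{N-1}}\int_0^s\tau^{N-1}g(\tau)\,d\tau\,ds$ with $g=f(u^\ast)$ is circular as written and does not close without this observation.

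What you flag as the ``main obstacle''---delicate ODE asymptotics, Gronwall in $t=-\log r$, matched asymptotics near $r=0$---is in fact handled in one line by the comparison principle of Proposition~\ref{key}(ii): from $2(N-2)/r^2\le\Psi(r)\le(N-2)^2/(4r^2)$ and the explicit solutions $\omega_1(r)=r^{-1}$, $\omega_2(r)=r^{-N/2+\sqrt{N-1}+1}$ of $(P_{\Psi_1})$, $(P_{\Psi_2})$ for the endpoint potentials, one gets $r^{-1}\le\omega(r)\le r^{-N/2+\sqrt{N-1}+1}$. The lower bound gives $u(r)\ge|\log r|$, hence unboundedness; the upper bound gives $\omega'(1)\ge\omega_2'(1)$, hence $f(0)=\omega'(1)+(N-1)>0$. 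No asymptotic analysis is needed. Your uniqueness argument is morally right but again becomes clean once you use $(P_\Psi)$: if $g'(v^\ast)=\Psi$ then $v_r^\ast/v_r^\ast(1)$ also solves $(P_\Psi)$, so equals $\omega=-u_r^\ast$, whence $v^\ast=\alpha u^\ast$.

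Finally, your explanations for the restriction $N\ge 11$ (nodal behavior, $H^1_0$ membership) are off: the proof as written works verbatim for $N=10$. The restriction is only there to make the hypothesis nontrivial, since $2(N-2)<(N-2)^2/4$ iff $N\ge 11$; for $N=10$ the pinching forces $\Psi(r)=16/r^2$ and the theorem reduces to the explicit exponential case.
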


\section{Proof of the main results}

First of all, if $\Omega=B_1$, and $f$ satisfies (\ref{convexa}), it is easily seen by the Gidas-Ni-Nirenberg symmetry result that $u_\lambda$, the solution of $(P_\lambda)$, is radially decreasing for $0<\lambda<\lambda^\ast$. Hence, its limit $u^\ast$ is also radially decreasing. In fact $u_r^\ast(r)<0$ for all $r\in (0,1]$, where $u_r$ denotes the radial derivative of a radial function $u$. Moreover, it is immediate that the minimality of $u_\lambda$ implies its stability. Clearly, we can pass to the limit and obtain that $u^\ast$ is also stable, which means

\begin{equation}\label{inequa}
\int_{B_1}  \vert \nabla \xi\vert^2 \, dx\geq\int_{B_1} \lambda^\ast f'(u^\ast)\xi^2 \, dx
\end{equation}
\noindent for every $\xi\in C^\infty (B_1)$ with compact support in $B_1$.

On the other hand, differentiating $-\Delta u^\ast =\lambda^\ast f(u^\ast)$ with respect to $r$, we have
\begin{equation}\label{ahiledao}
-\Delta u_r^\ast=\left(\lambda^\ast f'(u^\ast) -\frac{N-1}{r^2}\right) u_r^\ast, \ \ \mbox{ for all }r\in (0,1].
\end{equation}

\begin{proposition}\label{key}

Let $N\geq 3$ and $\Psi:\overline{B_1}\setminus\{ 0\} \rightarrow {\mathbb R}$ be a radially symmetric function satisfying that there exists $C>0$  such that $\vert \Psi (r)\vert /r^2 \leq C$,  for every $0<r\leq 1$, and

\begin{equation}\label{ineq}
\int_{B_1}  \vert \nabla \xi\vert^2 \, dx\geq\int_{B_1} \Psi\, \xi^2 \, dx
\end{equation}
\noindent for every $\xi\in C^\infty (B_1)$ with compact support in $B_1$.

Then

\begin{enumerate}

\item[i)] The problem

$$
\left\{
\begin{array}{rll}
-\Delta \omega(x)&={\displaystyle \left( \Psi (x)-\frac{N-1}{\vert x\vert^2}\right) \omega (x)} \ \ \ \ \ \  & \mbox{ in } B_1 \, ,\\
\omega (x)&= 1 & \mbox{ on } \partial B_1 \, ,\\
\end{array}
\right. \eqno{(P_\Psi})
$$

\noindent has an unique solution $\omega\in W^{1,2}(B_1)$. Moreover $\omega$ is radial and strictly positive in $B_1\setminus \{ 0\}$ .

 \
 
 \item[ii)] If  $\Psi_1 \leq \Psi_2$ in $\overline{B_1}\setminus \{ 0\} $ satisfy the above hypotheses and $\omega_i$ $(i=1,2)$ are the solutions of the problems 
$(P_{\Psi_i})$  then $\omega_1 \leq \omega_2$ in $\overline{B_1}\setminus \{ 0\}$.

\end{enumerate}

\end{proposition}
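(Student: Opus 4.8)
The plan is to recast $(P_\Psi)$ as a variational problem for the symmetric bilinear form
$$a(u,v):=\int_{B_1}\nabla u\cdot\nabla v\,dx-\int_{B_1}\Big(\Psi(x)-\frac{N-1}{|x|^2}\Big)u\,v\,dx ,\qquad u,v\in W_0^{1,2}(B_1),$$
and the heart of the matter is to show that $a$ is bounded and \emph{coercive} on $W_0^{1,2}(B_1)$. Boundedness is routine: the bound $|\Psi(r)|\leq Cr^2\leq C$ lets the $\Psi$-term be estimated by $\|u\|_{L^2}\|v\|_{L^2}$, while Hardy's inequality $\int_{B_1}\xi^2/|x|^2\,dx\leq\frac{4}{(N-2)^2}\int_{B_1}|\nabla\xi|^2\,dx$, valid because $N\geq3$, handles the inverse-square term. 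For coercivity I would combine two elementary bounds. First, (\ref{ineq}) extended to $W_0^{1,2}(B_1)$ by density gives $\int|\nabla\xi|^2\geq\int\Psi\,\xi^2$, so, using $|x|\leq1$,
$$a(\xi,\xi)\ \geq\ (N-1)\int_{B_1}\frac{\xi^2}{|x|^2}\,dx\ \geq\ (N-1)\,\|\xi\|_{L^2}^2 .$$
Second, since $|\Psi|\leq C$ and the Hardy term is nonnegative, $a(\xi,\xi)\geq\int_{B_1}|\nabla\xi|^2\,dx-C\|\xi\|_{L^2}^2$. Taking the convex combination of these with weight $t=(C+1)/(C+N-1)\in(0,1)$ yields $a(\xi,\xi)\geq\frac{N-2}{C+N-1}\,\|\xi\|_{W^{1,2}(B_1)}^2$. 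This is the step I expect to be the main obstacle: the inverse-square term is not a lower-order perturbation, but it carries the favourable sign and only helps, while $|\Psi(r)|\leq Cr^2$ is exactly what lets $\Psi$ be absorbed as an $L^2$-perturbation and $N\geq3$ is what makes both $|x|^{-2}$ integrable and Hardy's constant positive.

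Granting coercivity, part (i) is essentially bookkeeping. I would seek $\omega$ in the form $\omega=1+v$ with $v\in W_0^{1,2}(B_1)$; then $(P_\Psi)$ becomes $a(v,\phi)=\int_{B_1}\big(\Psi-\frac{N-1}{|x|^2}\big)\phi\,dx$ for all $\phi\in W_0^{1,2}(B_1)$, and the right-hand side is a bounded functional on $W_0^{1,2}(B_1)$ (for the singular part use Cauchy--Schwarz together with $\int_{B_1}|x|^{-2}\,dx<\infty$, again valid since $N\geq3$, and Hardy). Lax--Milgram then gives a unique such $v$, hence a unique $\omega\in W^{1,2}(B_1)$ with trace $1$ on $\partial B_1$. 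Radiality is immediate from uniqueness, since $\Psi$ is radial and $(P_\Psi)$ is invariant under rotations. For the sign, I would test the equation for $\omega$ against $\phi=\omega^-$ — which belongs to $W_0^{1,2}(B_1)$ because $0\leq\omega^-\leq v^-$ — and use $\nabla\omega^+\cdot\nabla\omega^-=0$ and $\omega^+\omega^-=0$ a.e.\ to obtain $a(\omega^-,\omega^-)=0$, whence coercivity forces $\omega^-\equiv0$, i.e.\ $\omega\geq0$. Finally, away from the origin the potential $\Psi-\frac{N-1}{|x|^2}$ is locally bounded, so elliptic regularity makes $\omega$ locally $C^{1,\alpha}$ in $B_1\setminus\{0\}$; rewriting the equation as $-\Delta\omega+\big(\Psi-\frac{N-1}{|x|^2}\big)^-\omega=\big(\Psi-\frac{N-1}{|x|^2}\big)^+\omega\geq0$ and applying the strong maximum principle on the connected set $B_1\setminus\{0\}$ (connected since $N\geq3$) leaves only the alternatives $\omega\equiv0$, impossible since its trace is $1$, or $\omega>0$ throughout $B_1\setminus\{0\}$.

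Part (ii) follows the same scheme. With $\delta:=\Psi_2-\Psi_1\geq0$ (bounded) and $w:=\omega_1-\omega_2\in W_0^{1,2}(B_1)$, subtracting the two equations gives $a_1(w,\phi)=-\int_{B_1}\delta\,\omega_2\,\phi\,dx$ for every $\phi\in W_0^{1,2}(B_1)$, where $a_1$ is the form attached to $\Psi_1$, coercive by the same argument. Testing with $\phi=w^+\geq0$ and using $\delta\geq0$ and $\omega_2>0$ from part (i) gives $a_1(w^+,w^+)=a_1(w,w^+)=-\int_{B_1}\delta\,\omega_2\,w^+\,dx\leq0$, so coercivity forces $w^+\equiv0$, that is $\omega_1\leq\omega_2$ in $B_1$, and hence on $\overline{B_1}\setminus\{0\}$ by the local regularity already noted. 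Apart from the coercivity estimate, every ingredient here — Lax--Milgram, uniqueness implying radiality, the sign-testing, the strong maximum principle away from the origin — is standard.
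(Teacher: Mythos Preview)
Your argument is correct for the hypothesis as literally printed, and it follows the same variational template as the paper: use the stability inequality (\ref{ineq}) together with the favourable sign of the $(N-1)/|x|^2$ term to make the quadratic form strictly positive on $W_0^{1,2}(B_1)$, deduce existence and uniqueness, and prove (ii) by testing with $(\omega_1-\omega_2)^+$ (this last step is essentially identical to the paper's). The packaging differs slightly---you apply Lax--Milgram where the paper runs the direct method of minimization, and for strict positivity you invoke the strong maximum principle on $B_1\setminus\{0\}$ whereas the paper exploits radiality and the uniqueness theorem for the radial ODE Cauchy problem---but these are equivalent routes to the same conclusion.

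One point deserves attention, however. The printed hypothesis ``$|\Psi(r)|/r^2\le C$'' is almost certainly a misprint for $|\Psi(r)|\,r^2\le C$, i.e.\ $|\Psi(r)|\le C/r^2$: this is what the paper's own proof actually uses (the passage to $\int_{B_1}\frac{-C(2|\omega|+1)}{|x|^2}\,dx$ is written with that bound in mind), and it is what every later application of the proposition requires---for instance, in the proof of Theorem~\ref{cualquiera} the proposition is invoked with $\Psi(r)\ge 2(N-2)/r^2$, which is unbounded near the origin. Under that intended hypothesis your second coercivity estimate, ``since $|\Psi|\le C$, $a(\xi,\xi)\ge\int|\nabla\xi|^2-C\|\xi\|_{L^2}^2$'', and your boundedness estimate for the $\Psi$-term both break down as written. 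The repair is easy---replace $C\|\xi\|_{L^2}^2$ by $C\int_{B_1}\xi^2/|x|^2\,dx$, bound it via Hardy, and adjust the convex-combination weight accordingly---but you should be aware that your proof, taken verbatim, does not cover the version of Proposition~\ref{key} that the rest of the paper actually needs.
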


\begin{proof} 
 i) By Hardy's inequality
 
$$\int_{B_1}  \vert \nabla \xi\vert^2 \, dx\geq \frac{(N-2)^2}{4}\int_{B_1}\frac{\xi^2}{\vert x\vert^2} \, dx,$$

\noindent for every $\xi\in C^\infty (B_1)$ with compact support in $B_1$,  we can define the functional $I:X\rightarrow {\mathbb R}$ by
 
 $$I(\omega):=\frac{1}{2}\int_{B_1} \vert \nabla \omega \vert^2 dx-\frac{1}{2}\int_{B_1} \left( \Psi-\frac{N-1}{\vert x\vert^2}\right) \omega^2 dx,$$
 
 \noindent for every $\omega\in X$, where $X=\left\{ \omega:B_1\rightarrow {\mathbb R} \mbox{ such that } \omega-1\in W_0^{1,2}(B_1)\right\} $.
 
 It is immediate that 
 
 $$I'(\omega)(v)=\int_{B_1}\nabla \omega \nabla v \, dx-\int_{B_1} \left( \Psi-\frac{N-1}{\vert x\vert^2}\right) \omega v\, dx\, ; \ \ \ \omega\in X,v\in W_0^{1,2}(B_1).$$
 
 Therefore to prove the existence of a solution of (P$_\Psi$) it is sufficient to show that $I$ has a global minimum in $X$. To do this, we first prove that $I$ is bounded from below in $X$. Taking $v=\omega -1$ in (\ref{ineq}) and applying Cauchy–Schwarz inequality we obtain
 
 $$I(\omega)\geq \frac{1}{2}\int_{B_1}\Psi (\omega-1)^2 dx-\frac{1}{2}\int_{B_1} \left( \Psi-\frac{N-1}{\vert x\vert^2}\right) \omega^2 dx=$$
 
 $$=\frac{1}{2}\int_{B_1} \Psi (-2\omega +1) dx+\frac{1}{2}\int_{B_1} \frac{N-1}{\vert x\vert^2} \omega^2 dx$$
 
 $$\geq\frac{1}{2}\int_{B_1} \frac{-C (2\vert\omega\vert+1)+(N-1)\omega^2}{\vert x\vert^2}dx\geq\frac{1}{2}\int_{B_1}\frac{-C-C^2}{\vert x\vert^2}\, dx.$$
 
 Hence $I$ is bounded from below in $X$. Take $\{ w_n\}\subset X$ such that $ I(\omega_n)\rightarrow \inf I$. Let us show that  $\{ w_n\}$ is bounded in $W^{1,2}$. To this end, taking into account the above inequalities and that $-C(2\vert s\vert+1)+(N-1)s^2\geq -C(2\vert s\vert+1)+2s^2\geq s^2-C-C^2$ for every $N\geq 3$ and $s\in  {\mathbb R}$, we have
 
 $$I(\omega_n)\geq \frac{1}{2}\int_{B_1} \frac{-C(2 \vert\omega_n\vert+1)+(N-1)\omega_n^2}{\vert x\vert^2}dx\geq\frac{1}{2}\int_{B_1}\frac{\omega_n^2-C-C^2}{\vert x\vert^2}\, dx.$$
 
 From this $\int_{B_1}\omega_n^2/\vert x\vert^2$ is bounded. Therefore  $\int_{B_1}\Psi\omega_n^2$ is also bounded. From the definition of $I$ we conclude that $\int_{B_1}\vert \nabla \omega_n\vert^2$ is bounded, which clearly implies that $\{ w_n\}$ is bounded in $W^{1,2}$. 
 
 Since $X$ is a weakly closed subset of $W^{1,2}$, we have that, up to a subsequence, $\omega_n \rightharpoonup \omega_0\in X$. Taking $v=\omega_n-\omega_0$ in (\ref{ineq}) we deduce
 
 \
 
 $I(\omega_n)-I(\omega_0)$
 $$=\frac{1}{2}\int_{B_1} \vert \nabla (\omega_n-\omega_0) \vert^2 dx-\frac{1}{2}\int_{B_1} \Psi (\omega_n-\omega_0)^2 dx+\frac{1}{2}\int_{B_1} \frac{(N-1)(\omega_n-\omega_0)^2}{\vert x\vert^2} dx$$
 $$+\int_{B_1}\nabla \omega_0\nabla (\omega_n-\omega_0) dx-\int_{B_1}\Psi \omega_0 (\omega_n-\omega_0) dx+\int_{B_1}\frac{(N-1)\omega_0 (\omega_n-\omega_0)}{\vert x\vert^2}dx$$
 $$\geq \int_{B_1}\nabla \omega_0\nabla (\omega_n-\omega_0) dx-\int_{B_1}\Psi \omega_0 (\omega_n-\omega_0) dx+\int_{B_1}\frac{(N-1)\omega_0 (\omega_n-\omega_0)}{\vert x\vert^2}dx.$$
 
Since $\omega_n-\omega_0 \rightharpoonup 0$, taking limit as $n$ tends to infinity in the above inequality we conclude

$$(\inf I)-I(\omega_0)\geq 0,$$

\noindent which implies that $I$ which attains its minimum at $\omega_0$. The existence of solution of  (P$_\Psi$) is proven.

To show the uniqueness of solution suppose that there exists two solutions $\omega_1$ and $\omega_2$ of the same problem  (P$_\Psi$). Then $\omega_2-\omega_1\in W_0^{1,2}$. By (\ref{ineq}) we have
 
 $$0=I'(\omega_2)(\omega_2-\omega_1)-I'(\omega_1)(\omega_2-\omega_1)$$
 $$=\int_{B_1} \vert \nabla (\omega_2-\omega_1) \vert^2 dx-\int_{B_1} \Psi (\omega_2-\omega_1)^2 dx+\int_{B_1} \frac{(N-1)(\omega_2-\omega_1)^2}{\vert x\vert^2} dx$$
 $$\geq \int_{B_1} \frac{(N-1)(\omega_2-\omega_1)^2}{\vert x\vert^2} dx,$$
 
\noindent which implies that $\omega_1=\omega_2$. The uniqueness is proven.
 
The radial symmetry of the solution of (P$_\Psi$)  follows easily from the uniqueness of solution and the radiality of the function $\Psi(x)-(N-1)/\vert x\vert^2$ and the boundary condition of the problem.

Finally, to prove that the solution $\omega$ of (P$_\Psi$) is strictly positive in $B_1\setminus \{ 0\}$ suppose, contrary to our claim, that there exists $r_0\in (0,1)$ such that $\omega(r_0)=0$ (with radial notation). Thus the function $v$ defined by $v=\omega$ in $B_{r_0}$ and $v=0$ in $B_1\setminus \overline{B_{r_0}}$ is in $W_0^{1,2}(B_1)$. By (\ref{ineq}) we have 
 
 $$0=I'(\omega)(v)=\int_{B_{r_0}}\vert \nabla \omega\vert^2 dx-\int_{B_{r_0}}\Psi \omega^2 dx+\int_{B_{r_0}}\frac{(N-1)\omega^2}{\vert x\vert^2}dx$$
 $$\geq \int_{B_{r_0}}\frac{(N-1)\omega^2}{\vert x\vert^2}dx.$$
 
 Therefore $\omega=0$ in $B_{r_0}$. In particular $\omega(r_0)=\omega'(r_0)=0$ (with radial notation), which implies, by the uniqueness of the corresponding Cauchy problem, that $\omega=0$ in $(0,1]$. This contradicts $\omega(1)=1$.
 
 \
 
 ii) Consider the function $v=(\omega_1-\omega_2)^+=\max\{0,\omega_1-\omega_2\}\in W_0^{1,2}(B_1)$ in the weak formulation of problem (P$_{\Psi_1}$). We have
 
 $$0=\int_{B_1}\left(\nabla \omega_1 \nabla (\omega_1-\omega_2)^+ -\Psi_1 \omega_1 (\omega_1-\omega_2)^+ +\frac{(N-1)\omega_1 (\omega_1-\omega_2)^+ }{\vert x\vert^2}\right) dx$$
 
Consider the same function $v=(\omega_1-\omega_2)^+$ in the weak formulation of problem (P$_{\Psi_2}$). Taking into account that $\Psi_1\leq \Psi_2$ and $\omega_2\geq 0$ we obtain

$$ 0=\int_{B_1}\left(\nabla \omega_2 \nabla (\omega_1-\omega_2)^+ -\Psi_2 \omega_2 (\omega_1-\omega_2)^+ +\frac{(N-1)\omega_2 (\omega_1-\omega_2)^+ }{\vert x\vert^2}\right) dx$$
$$\leq \int_{B_1}\left(\nabla \omega_2 \nabla (\omega_1-\omega_2)^+ -\Psi_1 \omega_2 (\omega_1-\omega_2)^+ +\frac{(N-1)\omega_2 (\omega_1-\omega_2)^+ }{\vert x\vert^2}\right) dx$$

Subtracting the above two expressions it is follows that

$$0\geq\int_{B_1} \vert \nabla (\omega_1-\omega_2)^+ \vert^2 dx-\int_{B_1} \Psi_1 (\omega_1-\omega_2)^{+\, 2} dx+\int_{B_1} \frac{(N-1)(\omega_1-\omega_2)^{+\, 2}}{\vert x\vert^2} dx$$
 $$\geq \int_{B_1} \frac{(N-1)(\omega_1-\omega_2)^{+\, 2}}{\vert x\vert^{2}} dx.$$
 
 This implies $(\omega_1-\omega_2)^+=0$. Hence $\omega_1\leq \omega_2$, which is our claim.
\end{proof}

\noindent\textbf{Proof of Theorem \ref{limsup}.}
We first prove that $\lambda^\ast f'(u^\ast(r))\leq \lambda_1/r^2$ for every $r\in (0,1]$. To see this, let $0<\varphi_1$ be the first eigenfunction of the linear problem $-\Delta v=\lambda v$ in $B_1\subset {\mathbb R}^N$ with Dirichlet conditions $v=0$ on $\partial B_1$. Then $\int_{B_1} \vert\nabla \varphi_1 \vert^2=\lambda_1 \int_{B_1}\varphi_1^2$. By density, for arbitrary $0<r\leq 1$, we could take in (\ref{inequa}) the radial function $\xi=\varphi_1 (\cdot/r)$ in $B_r$ and $\xi=0$ in $B_1 \setminus \overline{B_r}$. Since $f'$ is nondecreasing and $u^\ast$ is radially decreasing, then $f'(u^\ast)$ is radially decreasing. An easy computation shows that

$$ \int_{B_1} \vert\nabla \xi \vert^2=\int_{B_r} \vert\nabla \xi \vert^2=r^{N-2} \int_{B_1} \vert\nabla \varphi_1 \vert^2=\lambda_1 r^{N-2} \int_{B_1}\varphi_1^2\ ,$$

$$\int_{B_1}\lambda^\ast f'(u^\ast) \xi^2=\int_{B_r}\lambda^\ast f'(u^\ast) \xi^2\geq\lambda^\ast f'(u^\ast(r)) \int_{B_r} \xi^2=\lambda^\ast f'(u^\ast(r)) r^N \int_{B_1}\varphi_1^2\ .$$

Combining this with (\ref{inequa}) we obtain the desired conclusion. Consequently $\limsup_{r\to 0} r^2 f'(u^\ast (r))\leq \lambda_1/\lambda^\ast$.

We now prove that $\limsup_{r\to 0} r^2 f'(u^\ast (r))\geq 2(N-2)/\lambda^\ast$. To obtain a contradiction, suppose that  there exists $r_0\in (0,1]$ and $\varepsilon>0$ such that

\begin{equation}\label{ves}
\lambda^\ast f'(u^\ast (r))\leq \frac{2(N-2)-\varepsilon}{r^2}, 
\end{equation}

\noindent for every $r\in (0,r_0]$. Consider now the radial function $\omega (r):=u_r^\ast (r_0\, r)/u_r^\ast (r_0)$, defined in $\overline{B_1}\setminus\{ 0\}$. Applying (\ref{ahiledao}), an easy computation shows that $\omega(1)=1$ and

$$-\Delta \omega (r)=\frac{1}{u_r^\ast (r_0)}r_0^2\left( -\Delta (u_r^\ast (r_0\, r))\right)$$
$$=\frac{1}{u_r^\ast (r_0)}r_0^2\left( \lambda^\ast f'(u^\ast (r_0\, r))-\frac{N-1}{(r_0\, r)^2}\right) u_r^\ast (r_0\, r)=\left( \Psi (r)-\frac{N-1}{r^2}\right)\omega(r),$$

\

\noindent for every $r\in (0,1)$, where $\Psi(r):=r_0^2\lambda^\ast f'(u^\ast (r_0\, r))$. From (\ref{ves}) we obtain $\Psi(r)\leq \Psi_2(r):=(2(N-2)-\varepsilon)/r^2$ for every $r\in (0,1]$. It is easy to check that the solution $\omega_2$ of the problem $(P_{\Psi_2})$ is given by $w_2(r)=r^\alpha$ ($0<r\leq 1$) where
$$\alpha=\frac{2-N+\sqrt{(N-4)^2+4\varepsilon}}{2}.$$
Therefore, applying Proposition \ref{key}, we can assert that $0<\omega (r)\leq r^\alpha$ for every $r\in (0,1]$. It is clear that $\alpha>-1$. Hence $\omega\in L^1(0,1)$. This gives $u_r^\ast \in L^1(0,r_0)$, which contradicts the unboundedness of $u^\ast$. \qed

\begin{lemma}\label{AB}

Let $N\geq 10$ and $0<A<B\leq 1$. Define the radial function $\Psi_{A,B}:\overline{B_1}\setminus\{ 0\} \rightarrow {\mathbb R}$ by 

$$\Psi_{A,B}(r):=\left\{
\begin{array}{ll}
0 & \mbox{ if } 0< r <A \, \\ \\
\displaystyle{\frac{2(N-2)}{r^2}} & \mbox{ if } A\leq r\leq B \, ,\\ \\
0 & \mbox{ if } B<r\leq 1.
\end{array}
\right.
$$

Let $\omega[A,B]$ be the unique radial solution of $(P_{\Psi_{A,B}})$. Then

$$\lim_{s \to 0}\int_0^1\omega[s e^{-1/s^3},s](r)dr=+\infty.$$

\end{lemma}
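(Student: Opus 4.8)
The plan is to solve the radial ODE for $\omega:=\omega[A,B]$ explicitly, using that $\Psi_{A,B}(r)-(N-1)/r^{2}$ is a constant multiple of $1/r^{2}$ on each of $(0,A)$, $(A,B)$, $(B,1)$, so the equation is a (different) Euler equation on each piece. First I would check that Proposition \ref{key} applies to $\Psi_{A,B}$: since $N\geq 10$ we have $\Psi_{A,B}(r)\leq 2(N-2)/r^{2}\leq (N-2)^{2}/(4r^{2})$, so Hardy's inequality yields (\ref{ineq}), and $|\Psi_{A,B}(r)|/r^{2}\leq 2(N-2)/A^{4}$ is bounded; hence $\omega$ exists, is unique in $W^{1,2}(B_1)$, and is radial and strictly positive on $B_1\setminus\{0\}$. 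In radial variables the equation reads $\omega''+\frac{N-1}{r}\omega'-\frac{N-1}{r^{2}}\omega=0$ on $(0,A)\cup(B,1)$, with indicial exponents $\{1,-(N-1)\}$, and $\omega''+\frac{N-1}{r}\omega'+\frac{N-3}{r^{2}}\omega=0$ on $(A,B)$, with exponents $\{-1,-(N-3)\}$. Membership in $W^{1,2}$ near the origin kills the $r^{-(N-1)}$ mode on $(0,A)$, so
$$\omega(r)=c_1 r \text{ on }(0,A),\qquad \omega(r)=e_1 r^{-1}+e_2 r^{-(N-3)}\text{ on }(A,B),\qquad \omega(r)=d_1 r+d_2 r^{-(N-1)}\text{ on }(B,1).$$

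Next I would impose $C^{1}$-matching at $r=A$ and $r=B$ (legitimate since $\Psi_{A,B}$ is bounded, so $\omega$ is $C^{1}$ across $r=A$ and $r=B$) together with $\omega(1)=1$, and solve the resulting $5\times 5$ linear system. Writing $t:=-e_2$, the matching at $A$ gives $e_1=\tfrac{N-2}{2}\,t\,A^{-(N-4)}$ and $c_1=\tfrac{N-4}{2}\,t\,A^{-(N-2)}$; the matching at $B$ gives $d_1=\tfrac{t}{N}\bigl(\tfrac{(N-2)^{2}}{2}A^{-(N-4)}B^{-2}-2B^{-(N-2)}\bigr)$ and $d_2=\tfrac{N-2}{N}\,t\,\bigl(A^{-(N-4)}B^{N-2}-B^{2}\bigr)$; and the normalization $d_1+d_2=1$ determines $t>0$. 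The signs (all $c_1,e_1,d_1,d_2>0$ and $e_2<0$) are consistent with — and indeed forced by — the strict positivity of $\omega$ from Proposition \ref{key}.

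Then I would specialize to $A=se^{-1/s^{3}}$, $B=s$ and let $s\to 0$. In the expression for $d_1+d_2$ the term $\tfrac{(N-2)^{2}}{2}A^{-(N-4)}B^{-2}=\tfrac{(N-2)^{2}}{2}s^{-(N-2)}e^{(N-4)/s^{3}}$ dominates every other term, so the normalization yields $t\sim \tfrac{2N}{(N-2)^{2}}\,s^{N-2}e^{-(N-4)/s^{3}}$, whence $t\,A^{-(N-4)}\sim \tfrac{2N}{(N-2)^{2}}\,s^{2}\to 0$. With this I would split $\int_0^1\omega\,dr=\int_0^A+\int_A^B+\int_B^1$. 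The first integral equals $\tfrac{c_1A^{2}}{2}=\tfrac{N-4}{4}\,t\,A^{-(N-4)}\to 0$. The third equals $d_1\tfrac{1-B^{2}}{2}-\tfrac{d_2}{N-2}+\tfrac{d_2}{N-2}B^{-(N-2)}$ and tends to $\tfrac12$, since $d_1\to 1$, $d_2\to 0$ and $\tfrac{d_2}{N-2}B^{-(N-2)}=\tfrac{t}{N}\bigl(A^{-(N-4)}-B^{-(N-4)}\bigr)\to 0$. For the middle integral,
$$\int_A^B\omega\,dr=e_1\ln\frac{B}{A}-\frac{e_2}{N-4}\bigl(B^{-(N-4)}-A^{-(N-4)}\bigr),$$
where the second term is $O(s^{2})$, while $\ln(B/A)=1/s^{3}$ gives $e_1\ln(B/A)=\tfrac{N-2}{2}\,t\,A^{-(N-4)}\,s^{-3}\sim \tfrac{N}{(N-2)s}\to+\infty$. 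Adding the three contributions yields $\int_0^1\omega[se^{-1/s^{3}},s]\,dr\to+\infty$, which is the claim.

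The main obstacle is the bookkeeping in the last two steps: the relevant quantities sit on wildly different scales ($A^{-(N-4)}$ is exponentially large, $t$ exponentially small, and $\ln(B/A)=s^{-3}$ only polynomially large), so one must verify that after the normalization $d_1+d_2=1$ exactly one term — $e_1\ln(B/A)$ — survives and diverges while every other term converges to a finite limit. The precise choice $A=se^{-1/s^{3}}$ is tuned so that $t\,A^{-(N-4)}\to 0$ (which keeps $\int_0^A\omega$, the boundary value $\omega(A)$, and the boundary-layer term $\tfrac{d_2}{N-2}B^{-(N-2)}$ under control) while simultaneously $t\,A^{-(N-4)}\ln(B/A)\to\infty$.
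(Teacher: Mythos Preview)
Your proposal is correct and follows essentially the same approach as the paper: both write down the explicit piecewise Euler solution for $\omega[A,B]$ and show that the contribution of $\int_A^B\omega\,dr$ behaves like a constant times $B^{2}\log(B/A)\sim c/s$ when $A=se^{-1/s^{3}}$, $B=s$. The only minor difference is that the paper, instead of computing all three integrals and tracking their asymptotics, simply bounds $\omega[A,B](r)$ from below by $\dfrac{N(N-4)B^{2}}{(N-2)^{2}+2(N-2)B^{N}}\,r^{-1}$ on $[A,B]$ and integrates that, which avoids the extra bookkeeping you identify as the main obstacle.
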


\begin{proof}
We first observe that since $N\geq 10$ we have $2(N-2)\leq (N-2)^2/4$. Hence $0\leq \Psi_{A,B}\leq (N-2)^2/(4r^2)$ for every $0<r\leq 1$.  Thus, by Hardy's inequality, $\Psi_{A,B}$ satisfies (\ref{ineq}) and we can apply Proposition \ref{key}.

We check at once that

$$\omega[A,B](r)=\left\{
\begin{array}{ll}\frac{N(N-4)B^{N-2}A^{-2}\ r}{(N-2)^2B^{N-4}-4A^{N-4}+2(N-2)B^N(B^{N-4}-A^{N-4})} & \mbox{ if } 0\leq r <A ,\\ \\
\frac{N(N-2)B^{N-2}\ r^{-1}\ -\ 2NA^{N-4}B^{N-2}\ r^{3-N}}{(N-2)^2B^{N-4}-4A^{N-4}+2(N-2)B^N(B^{N-4}-A^{N-4})}  & \mbox{ if } A\leq r\leq B , \\ \\
\frac{\left( (N-2)^2B^{N-4}-4A^{N-4}\right) \ r\ +\ 2(N-2)B^N(B^{N-4}-A^{N-4})\ r^{1-N}}{(N-2)^2B^{N-4}-4A^{N-4}+2(N-2)B^N(B^{N-4}-A^{N-4})}  & \mbox{ if } B<r\leq 1.
\end{array}
\right.
$$

\

To see that $\omega[A,B]$ is the solution of (P$_{\Psi_{A,B}}$) it suffices to observe that $\omega[A,B]\in C^1(\overline{B_1}\setminus\{ 0\})\cap W^{1,2}(B_1)$ satisfies pointwise  (P$_{\Psi_{A,B}}$) if $\vert x\vert \neq A,B$.

On the other hand, taking into account that $r^{3-N}\leq A^{4-N}r^{-1}$ if $A\leq r\leq B$, we have that

$$\omega[A,B](r)\geq \frac{N(N-2)B^{N-2}\ r^{-1}\ -\ 2NA^{N-4}B^{N-2}A^{4-N}\ r^{-1}}{(N-2)^2B^{N-4}-4A^{N-4}+2(N-2)B^N(B^{N-4}-A^{N-4})}$$

$$\geq \frac{N(N-2)B^{N-2}\ r^{-1}\ -\ 2NA^{N-4}B^{N-2}A^{4-N}\ r^{-1}}{(N-2)^2B^{N-4}+2(N-2)B^N B^{N-4}}$$

$$=\frac{N(N-4)B^2 \ r^{-1}}{(N-2)^2+2(N-2)B^N} \, ,\ \ \mbox{  if } A\leq r\leq B.$$

\

From this and the positiveness of $\omega[A,B]$ it follows that

$$\int_0^1\omega[A,B](r)\geq\int_A^B\omega[A,B](r)dr\geq \int_A^B \frac{N(N-4)B^2 \ r^{-1}}{(N-2)^2+2(N-2)B^N}dr$$

$$=\frac{N(N-4)B^2 \  \log (B/A)}{(N-2)^2+2(N-2)B^N}.$$

\

Taking in this inequality $A=s e^{-1/s^3}$, $B=s$ (for arbitrary $0<s\leq 1$),
 it may be concluded that
 
 $$\int_0^1\omega[s e^{-1/s^3},s](r)dr\geq\frac{N(N-4)}{s\left( (N-2)^2+2(N-2)s^N\right)}$$
 
 \
 
\noindent and the lemma follows.
 \end{proof}
 
 \begin{proposition}\label{peasofuncion}
 
Let $N\geq 10$ and $\varphi :(0,1)\rightarrow {\mathbb R}^+$ such that $\lim_{r\to 0} \varphi (r)=+\infty$. Then there exists $\Psi\in C^\infty (\overline{B_1}\setminus \{ 0\})$ an unbounded radially symmetric decreasing function satisfying
 
 \begin{enumerate}
 \item[i)] $\displaystyle{0<\Psi(r)\leq\ \frac{2(N-2)}{r^2}}$ and $\Psi'(r)<0$ for every $0<r\leq 1$.
 
 \item[ii)] $\displaystyle{\liminf_{r\to 0} \frac{\Psi(r)}{\varphi (r)}=0}$, $\displaystyle{\limsup_{r\to 0} r^2 \Psi (r)=2(N-2)}$.
 
 \item[iii)] $\displaystyle{\int_0^1 \omega(r)dr=+\infty}$, where $\omega$ is the radial solution of (P$_\Psi$).

 \end{enumerate}

 \end{proposition}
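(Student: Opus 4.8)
The plan is to construct $\Psi$ directly, as a single smooth strictly decreasing profile which coincides with $2(N-2)/r^2$ on a sequence of ``ridges'' shrinking to the origin and which dips well below $2(N-2)/r^2$ on the ``valleys'' in between. The ridges will force $\int_0^1\omega=+\infty$ through Lemma \ref{AB} and the comparison principle of Proposition \ref{key}(ii), while on the valleys $\Psi$ is small enough compared with $\varphi$ to produce $\liminf_{r\to 0}\Psi/\varphi=0$ (a profile that simply stayed at $2(N-2)/r^2$ near $0$ would fail this last property for slowly growing $\varphi$).

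In detail: I would pick a sequence $s_k\downarrow 0$ inductively, set $A_k:=s_k e^{-1/s_k^3}$, $B_k:=s_k$, $R_k:=[A_k,B_k]$, arranged so that $\dots<B_{k+1}<A_k<B_k<\dots<B_1<1$, and define $\Psi(r)=2(N-2)/r^2$ on each $R_k$, $\Psi$ an arbitrary smooth decreasing function with $0<\Psi\le 2(N-2)/r^2$ on $(B_1,1]$, and on each valley $(B_{k+1},A_k)$ a smooth strictly decreasing function running from $\Psi(B_{k+1})=2(N-2)/B_{k+1}^2$ down to $\Psi(A_k)=2(N-2)/A_k^2$, which plunges steeply just to the right of $B_{k+1}$ to a level $v_k\in\bigl(2(N-2)/A_k^2,\,4(N-2)/A_k^2\bigr)$ and then decreases slowly on a long plateau back to $2(N-2)/A_k^2$ at $r=A_k$; all junctions are made $C^\infty$ by enlarging each ridge slightly and matching $2(N-2)/r^2$ to infinite order there. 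The steep drop at the left edge of each valley is feasible precisely because $2(N-2)/r^2$ is itself steeply decreasing there, leaving ample room underneath: writing the transition as $2(N-2)r^{-2}\zeta(r)$ with $0<\zeta\le1$ smooth decreasing keeps $\Psi\le 2(N-2)/r^2$ and $\Psi'<0$ automatic. Since $N\ge 10$ we have $2(N-2)\le(N-2)^2/4$, so $0\le\Psi\le(N-2)^2/(4r^2)$ and Hardy's inequality yields (\ref{ineq}); thus Proposition \ref{key} applies to $\Psi$ and to every $\Psi_{A_k,B_k}$.

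The sequence $s_k$ would be chosen to ``beat'' $\varphi$. Having fixed $s_1>\dots>s_k$, and hence $A_k$ and $v_k$, use $\lim_{r\to 0}\varphi(r)=+\infty$ to choose $\delta_k\in(0,\min\{1/k,\,A_k/2\})$ with $\varphi(r)\ge k\,v_k$ for all $r\in(0,\delta_k)$, and then choose $s_{k+1}\in(0,\delta_k/4)$ small enough that also $B_{k+1}<A_k$ (with room for the smoothing) and $s_{k+1}<1/(k+1)$. Then the plateau of the valley $(B_{k+1},A_k)$ contains a point $r_k\in(2s_{k+1},\delta_k)$, and there $\Psi(r_k)\le v_k$ while $\varphi(r_k)\ge k\,v_k$, so $\Psi(r_k)/\varphi(r_k)\le 1/k$; since $r_k<\delta_k\le 1/k\to 0$, this gives $\liminf_{r\to 0}\Psi(r)/\varphi(r)=0$. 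The rest of (i)--(ii) is immediate: $\Psi$ is smooth, radial, strictly decreasing and positive by construction; it is unbounded because $\Psi(A_k)=2(N-2)e^{2/s_k^3}/s_k^2\to+\infty$; and $\Psi\le 2(N-2)/r^2$ gives $\limsup_{r\to 0}r^2\Psi(r)\le 2(N-2)$, with equality because $s_k^2\Psi(s_k)=2(N-2)$ and $s_k\to 0$.

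Finally, for (iii): for every $k$, $\Psi\ge\Psi_{A_k,B_k}$ on $\overline{B_1}\setminus\{0\}$ — they agree on $R_k$ and $\Psi>0=\Psi_{A_k,B_k}$ off $R_k$ — so Proposition \ref{key}(ii) gives $\omega\ge\omega[A_k,B_k]$ on $\overline{B_1}\setminus\{0\}$, where $\omega$ is the radial solution of $(P_\Psi)$ furnished by Proposition \ref{key}(i). Hence $\int_0^1\omega(r)\,dr\ge\int_0^1\omega[s_k e^{-1/s_k^3},s_k](r)\,dr$ for every $k$, and letting $k\to\infty$ Lemma \ref{AB} forces $\int_0^1\omega(r)\,dr=+\infty$. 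I expect the main obstacle to be the construction in the second paragraph: producing one smooth strictly decreasing profile that is $2(N-2)/r^2$ on the ridges, lies below $2(N-2)/r^2$ everywhere, is $C^\infty$ across every junction, and is as small as $v_k$ on a part of each valley already lying inside $(0,\delta_k)$ where $\varphi$ has been made large. Once that profile exists, the verification of (i)--(iii) is routine.
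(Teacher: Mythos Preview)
Your proposal is correct and follows essentially the same route as the paper: build $\Psi$ equal to $2(N-2)/r^2$ on ridges $[s_k e^{-1/s_k^3},s_k]$, dip below on the intervening valleys so that $\Psi/\varphi\to 0$ along a subsequence, and obtain (iii) via $\Psi\ge\Psi_{A_k,B_k}$, Proposition~\ref{key}(ii), and Lemma~\ref{AB}. The only cosmetic differences are that the paper first reduces to $\varphi\le 2(N-2)/r^2$ and then pins the valley value at a single point $y_n$ via $\Psi(y_n):=\varphi(y_n)/(n+1)$ (so that $\Psi(y_n)/\varphi(y_n)=1/(n+1)$ directly), whereas you instead force $\varphi\ge k\,v_k$ on a whole interval $(0,\delta_k)$ and place a near-plateau at level $v_k$; both devices yield $\liminf\Psi/\varphi=0$ and leave the same ``simple but fussy'' smoothing step that you correctly flag as the only real work. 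One small caution on your parenthetical: the representation $\Psi=2(N-2)r^{-2}\zeta(r)$ with $\zeta$ \emph{decreasing} can only be local to the steep drop (since $\zeta=1$ on every ridge would force $\zeta\equiv 1$); near the right end $A_k$ you need $\zeta$ increasing back to $1$ with $\zeta'/\zeta<2/r$ to keep $\Psi'<0$, which is precisely the sort of tangential matching the paper also leaves implicit.
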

 
 \begin{proof}
 Without loss of generality we can assume that $\varphi (r)\leq 2(N-2)/r^2$ for $r\in (0,1]$, since otherwise we can replace $\varphi$ with $\overline{\varphi}=\min\left\{ \varphi, 2(N-2)/r^2\right\} $. It is immediate that  $\lim_{r\to 0} \varphi (r)=+\infty$ implies $\lim_{r\to 0} \overline{\varphi}(r)=+\infty$ and that  $0\leq \liminf_{r\to 0}\Psi(r)/\varphi (r)\leq\liminf_{r\to 0} \Psi(r)/\overline{\varphi } (r)$.
 
 We begin by constructing by induction two sequence $\{x_n\}$, $\{y_n\}\subset (0,1]$ in the following way: $x_1=1$ and, knowing the value of $x_n$ $(n\geq 1)$, take $y_n$ and $x_{n+1}$ such that 
 
 $$x_{n+1}<y_n<x_n e^{-1/x_n^3}<x_n,$$
 
 \
 
 \noindent where $y_n\in (0, x_n e^{-1/x_n^3})$ is chosen such that
 
 $$\varphi(y_n)>(n+1)\frac{2(N-2)}{\left(x_n e^{-1/x_n^3}\right)^2},$$
 
 \noindent which is also possible since $\lim_{r\to 0} \varphi (r)=+\infty$. The inequality $x_{n+1}<x_n e^{-1/x_n^3}$ for every integer $n\geq 1$ implies that $\{ x_n \}$ is a decreasing sequence tending to zero as $n$ goes to infinity. For this reason, to construct the radial function $\Psi$ in $B_1\setminus \{ 0\}$, it suffices to define $\Psi$ in every interval  $[x_{n+1},x_n)=[x_{n+1},y_n)\cup [y_n, x_n e^{-1/x_n^3}]\cup (x_n e^{-1/x_n^3},x_n)$. 
 
 First, we define
 
 $$\Psi(r):=\frac{2(N-2)}{r^2}, \ \ \ \mbox{  if } \  \ x_n e^{-1/x_n^3}<r<x_n,$$
 
 $$\Psi (y_n):=\frac{\varphi (y_n)}{n+1}.$$
 
 By the definition of $y_n$ we have that
 
 $$\Psi (y_n)=\frac{\varphi (y_n)}{n+1}>\frac{2(N-2)}{\left(x_n e^{-1/x_n^3}\right)^2}\ \mbox{ and }\ \Psi (y_n)<\varphi(y_n)\leq\frac{2(N-2)}{y_n^2}.$$
 
 Thus, it is a simple matter to see that it is possible to take a decreasing function $\Psi$ in $(y_n, x_n e^{-1/x_n^3}]$ such that $\Psi(r)<2(N-2)/r^2$ and $\Psi'(r)<0$ for $r\in(y_n, x_n e^{-1/x_n^3}]$ and $\Psi \in C^\infty ([y_n,x_n))$.
 
 Finally, we will define similarly $\Psi$ in $[x_{n+1},y_n)$. Taking into account that
 $$\Psi (y_n)<\varphi(y_n)\leq\frac{2(N-2)}{y_n^2}<\frac{2(N-2)}{x_{n+1}^2},$$
 
 \noindent we see at once that  it is possible to take a decreasing function $\Psi$ in $[x_{n+1}, y_n)$ such that 
 
 $$\Psi (x_{n+1})=\frac{2(N-2)}{x_{n+1}^2},$$

 $$\partial_r^{(k)} \Psi (x_{n+1})=\partial_r^{(k)} \left(2(N-2)/r^2\right)(x_{n+1}), \ \ \mbox{for every } k\geq 1,$$
 
 $$\Psi(r)<2(N-2)/r^2 \ \mbox{ and } \ \Psi'(r)<0 \ \ \ \mbox{for }r\in(x_{n+1},y_n),$$
 
 $$\Psi \in C^\infty ([x_{n+1},x_n)).$$
 
 Once we have constructed the radial function $\Psi$ it is evident that  $\Psi\in C^\infty (\overline{B_1}\setminus \{ 0\})$ an unbounded radially symmetric decreasing function satisfying i).
 
 To prove ii) it is sufficient to observe that the sequences  $\{x_n \}$, $\{ y_n \}$ tend to zero and satisfy $x_n^2 \Psi(x_n)=2(N-2)$ and $\Psi (y_n)/\varphi(y_n)=1/(n+1)$ for every integer $n\geq 1$.
 
It remains to prove iii). To this end consider an arbitrary $K>0$. Since $\{x_n \}$ tends to zero, applying Lemma \ref{AB} we can assert that there exists a natural number $m$ such that

$$\int_0^1\omega[x_m e^{-1/{x_m}^3},x_m](r)dr\geq K.$$

\

Observe that $\Psi\geq \Psi_{x_m e^{-1/{x_m}^3},x_m}$. By Proposition \ref{key} it follows that $\omega\geq\omega[x_m e^{-1/{x_m}^3},x_m]$. Thus

$$\int_0^1\omega (r) dr\geq \int_0^1\omega[x_m e^{-1/{x_m}^3},x_m](r)dr\geq K.$$

Since $K>0$ is arbitrary we conclude $\int_0^1\omega (r) dr=+\infty$.
\end{proof}

\

\noindent\textbf{Proof of Theorem \ref{liminf}.}
Consider the function $\Psi$ of Proposition \ref{peasofuncion} and let $\omega$ be the radial solution of $(P_\Psi)$. Since  $\Psi\in C^\infty (\overline{B_1}\setminus \{ 0\})$ we obtain $\omega\in C^\infty (\overline{B_1}\setminus \{ 0\})\cap W^{1,2}(B_1)$. Define the radial function $u$ by

$$u(r):=\int_r^1 \omega (t)dt, \  \ 0<r\leq 1.$$

It is obvious that $u\in C^\infty (\overline{B_1}\setminus \{ 0\})$. Since $u'=-\omega$ (with radial notation), we have $u\in W^{2,2}(B_1)\subset W^{1,2}(B_1)$. Moreover, from $\int_0^1 \omega(r)dr=+\infty$ we see that $u$ is unbounded.

On the other hand, since $u'=-\omega<0$ in $(0,1]$ (by Proposition \ref{key}), it follows that $u$ is a decreasing $C^\infty$ diffeomorphism between $(0,1]$ and $[0,+\infty)$. Therefore we can define $f\in C^\infty ([0,+\infty))$ by

$$f:=(-\Delta u)\circ u^{-1}.$$

\

We conclude that $u\in  W_0^{1,2} (B_1)$ is an unbounded solution of (P$_\lambda$) for $\lambda=1$.

Now, substituting $u_r$ by $-\omega$ in (\ref{ahiledao}) it follows that

$$-\Delta (-\omega)+f'(u)(-\omega)=\frac{N-1}{r^2}(-\omega) \ \ \mbox{ for } 0<r\leq 1$$.

Hence, since $\omega$ is a solution of (P$_\Psi$) we obtain $f'(u)\omega=\Psi \omega$ in $(0,1]$. From $\omega>0$ in $(0,1]$ we conclude that

$$f'(u(x))=\Psi(x)\ \ \ \mbox{ for every } x\in \overline{B_1}\setminus \{ 0\}.$$

We now prove that $f$ satisfies (\ref{convexa}). To do this, we first claim that $\omega'(1)\geq -1$. Since $\Psi\leq 2(N-2)/r^2$, applying Proposition \ref{key} with $\Psi_1=\Psi$ and $\Psi_2=2(N-2)/r^2$, we deduce $\omega_1\leq \omega_2$, where $\omega_1=\omega$ and $\omega_2=r^{-1}$, as is easy to check. Since $\omega_1(1)=\omega_2(1)$ it follows $\omega_1'(1)\geq\omega_2'(1)=-1$, as claimed.

Thus 

$$f(0)=f(u(1))=-\Delta u(1)=-u''(1)-(N-1)u'(1)=\omega'(1)+(N-1)\omega(1)$$
$$\geq (-1)+(N-1)>0.$$

On the other hand, since $f'(u(r))=\Psi(r)>0$ for every $r\in (0,1]$ it follows $f'>0$ in $[0,+\infty)$. Moreover $\lim_{s\to+\infty}f'(s)=\lim_{r\to 0}f'(u(r))=\lim_{r\to 0}\Psi(r)=+\infty$, and the superlinearity of $f$ is proven. Finally, to show the convexity of $f$,  it suffices to differentiate the expression $f'(u)=\Psi$ with respect to $r$ (with radial notation), obtaining $u'(r)f''(u(r))=\Psi'(r)$ in $(0,1]$. Since $u'<0$ and $\Psi'<0$ we obtain $f''(u(r))>0$ in $(0,1]$, which gives the convexity of $f$ in $[0,+\infty)$.

Finally, we show that $u$ is a stable solution of $(P_\lambda)$ for $\lambda=1$. Since $N\geq 10$ then $2(N-2)\leq (N-2)^2/4$, hence

$$f'(u(r))=\Psi(r)\leq \frac{2(N-2)}{r^2}\leq \frac{(N-2)^2}{4r^2}\ \ \mbox{ for every } 0<r\leq 1.$$

Thus, by Hardy's inequality, we conclude that $u$ is a stable solution of $(P_\lambda)$ for $\lambda=1$. 

On the other hand, in \cite[Th. 3.1]{BV} it is proved that if $f$ satisfies (\ref{convexa}) and $u\in W_0^{1,2}(\Omega)$ is an unbounded stable weak solution of ($P_\lambda$) for some $\lambda>0$, then $u=u^\ast$ and $\lambda=\lambda^\ast$. Therefore we conclude that $\lambda^\ast=1$, $u^\ast=u$ and

$$\liminf_{r\to 0}\frac{f'(u^\ast(r))}{\varphi (r)}=\liminf_{r\to 0}\frac{\Psi(r)}{\varphi(r)}=0.$$ \qed

\noindent\textbf{Proof of Theorem \ref{oscillation}.}
Take $\varphi(r)=1/r^2$, $0<r\leq1$, and consider the function $\Psi$ of Proposition \ref{peasofuncion}. Define

$$\Phi(r):=\frac{C_2-C_1}{2(N-2)}\Psi(r)+\frac{C_1}{r^2},$$

\noindent for every $0<r\leq 1$. Then it follows easily that $\Phi\in C^\infty (\overline{B_1}\setminus \{ 0\})$ is an unbounded radially symmetric decreasing function satisfying
 
 \begin{enumerate}
 \item[i)] $\displaystyle{\Psi(r)\leq\Phi(r)\leq \frac{(N-2)^2}{4r^2}}$ and $\Phi'(r)<0$ for every $0<r\leq 1$.
 
 \item[ii)] $\displaystyle{\liminf_{r\to 0} r^2 \Phi(r)=C_1}$, $\displaystyle{\limsup_{r\to 0} r^2 \Phi (r)=C_2}$.
 
 \item[iii)] $\displaystyle{\int_0^1 \varpi(r)dr=+\infty}$, where $\varpi$ is the radial solution of (P$_\Phi$).
  \end{enumerate}
  
  Note that iii) follows from Proposition \ref{key}, Proposition \ref{peasofuncion}  and the fact that $\varpi\geq\omega$, being $\omega$ the  radial solution of $(P_\Psi$).
 
 The rest of the proof is very similar to that of Theorem \ref{liminf}. Since $\Phi\in C^\infty(\overline{B_1}\setminus \{ 0\})$ we obtain $\varpi\in C^\infty (\overline{B_1}\setminus \{ 0\})\cap W^{1,2}(B_1)$. Define the radial function $u$ by

$$u(r):=\int_r^1 \varpi (t)dt, \  \ 0<r\leq 1.$$

Analysis similar to that in the proof of Theorem \ref{liminf}  shows that $u\in W^{2,2}$ is a decreasing $C^\infty$ diffeomorphism between $(0,1]$ and $[0,+\infty)$. Defining again $f:=(-\Delta u)\circ u^{-1}$, it is obtained that $f\in C^\infty ([0,+\infty))$. Thus  $u\in  W_0^{1,2} (B_1)$ is an unbounded solution of $(P_\lambda )$ for $\lambda=1$. It remains to prove that $f$ satisfies (\ref{convexa}). At this point, the only difference with respect to the proof of Theorem \ref{liminf} is that  $\Phi(r)\leq\Psi_2(r):=(N-2)^2/(4r^2)$ implies that $\varpi\leq\omega_2$, being $\omega_2(r)=r^{-N/2+\sqrt{N-1}+1}$ the solution of the problem $(P_{\Psi_2})$. Hence $\varpi'(1)\geq\omega_2'(1)=-N/2+\sqrt{N-1}+1$. Therefore

$$f(0)=f(u(1))=-\Delta u(1)=-u''(1)-(N-1)u'(1)=\varpi'(1)+(N-1)\varpi(1)$$
$$\geq (-N/2+\sqrt{N-1}+1)+(N-1)>0.$$

The rest of the proof runs as before. \qed

\

\noindent\textbf{Proof of Theorem \ref{cualquiera}.}
Since $0<\Psi\leq (N-2)^2/(4r^2)$ we have that $\Psi$ satisfies the hypothesis of Proposition \ref{key}. Thus we can consider the solution $\omega$ of the problem $(P_\Psi)$. From $\Psi\in C(\overline{B_1}\setminus \{ 0\})$ it follow that $\omega\in C^2(\overline{B_1}\setminus \{ 0\})\cap W^{1,2} (B_1)$. On the other hand, since $\Psi(r)\geq \Psi_1(r):=2(N-2)/r^2$ for $0<r\leq 1$, we have that $\omega(r)\geq\omega_1(r):=r^{-1}$ for  $0<r\leq 1$, where have used that $\omega_1$ is the solution of $(P_{\Psi_1})$ and we have applied Proposition \ref{key}. Define the radial function $u$ by

$$u(r):=\int_r^1 \omega (t)dt, \  \ 0<r\leq 1.$$

Therefore $u(r)\geq\vert\log r\vert$ for $0<r\leq 1$. In particular, $u$ is unbounded. From been proved, it follows that $u\in C^3(\overline{B_1}\setminus \{ 0\})\cap W^{2,2} (B_1)$. Hence (with radial notation) we have that $u$ is a decreasing $C^3$ diffeomorphism between $(0,1]$ and $[0,+\infty)$. Thus we can define $f\in C^1 ([0,+\infty))$ by

$$f:=(-\Delta u)\circ u^{-1}.$$

Analysis similar to that in the proof of Theorems \ref{liminf} and \ref{oscillation} shows that $f$ satisfies (\ref{convexa}), $\lambda^\ast=1$ and $u=u^\ast$.

Finally, to prove that $f$ is unique up to a multiplicative constant, suppose that $g$ is a function satisfying (\ref{convexa}), $\lambda^\ast=1$ and $g'(v^\ast(x))=\Psi (x)$, for every $x\in \overline{B_1}\setminus \{ 0\}$, where $v^\ast$ is the extremal solution associated to $g$.  From (\ref{ahiledao}) we see that

$$ -\Delta v_r^\ast=\left(g'(v^\ast) -\frac{N-1}{r^2}\right) v_r^\ast, \ \ \mbox{ for all }r\in (0,1].$$

It follows immediately that $v_r^\ast (r)/v_r^\ast (1)$ is the solution of the problem $(P_\Psi)$. Since this problem has an unique solution we deduce that  $v_r^\ast (r)/v_r^\ast (1)=\omega(r)=-u_r^\ast(r)$, for every $r\in (0,1]$. Thus $v_r^\ast =\alpha u_r\ast$ for some $\alpha >0$, which implies, since $v^\ast(1)=u^\ast(1)=0$, that $v^\ast =\alpha u^\ast$. The proof is completed by showing that

$$g(v^\ast (x))=-\Delta v^\ast (x)=\alpha (-\Delta u^\ast (x))=\alpha f(u^\ast(x))=\alpha f(v^\ast (x)/\alpha),$$

\noindent for every $x\in\overline{B_1}\setminus \{ 0\})$ and taking into account that $v^\ast\left(\overline{B_1}\setminus \{ 0\}\right)=[0,+\infty)$. \qed

\end{document}